\numberwithin{equation}{section}
\newtheorem{theorem}{Theorem}[section]
\newtheorem{proposition}[theorem]{Proposition}
\newtheorem{lemma}[theorem]{Lemma}
\newtheorem{definition}[theorem]{Definition}
\newtheorem*{defi}{Definition}
\newtheorem{remark}[theorem]{Remark}
\newtheorem{pb}{Problem}
\newenvironment{rk}{\begin{remark}\rm}{\end{remark}}
\newcommand{\R}{\mathbb{R}} 
\newcommand{\N}{\mathbb{N}}
\newcommand{\E}{\mathcal{E}} 
\newcommand{\M}{\mathcal{M}} 
\renewcommand{\H}{\mathcal{H}} 
\renewcommand{\P}{\mathcal{P}} 
\newcommand{\BMO}{\mathcal{BMO}} 
\newcommand{\cond}{\mathrm{cond}}
\newcommand{\bmo}{\mathsf{bmo}}
\newcommand{\mo}{\mathsf{mo}} 
\newcommand{\h}{\mathsf{h}} 
\newcommand{\at}{\mathrm{at}}
\newcommand{\cqd}{\hfill$\Box$}
\begin{document}

\title{Atomic decomposition and interpolation for Hardy spaces of noncommutative martingales}

\thanks{{\it 2000 Mathematics Subject Classification:} 46L53, 46L52.}
\thanks{{\it Key words:} Noncommutative $L^p$-spaces, noncommutative martingales,
atoms, interpolation, Hardy spaces, square functions.}

\author{Turdebek N. Bekjan}
\address{College of Mathematics and Systems Sciences, Xinjiang
University, Urumqi 830046, China}
\thanks{T. Bekjan is partially supported by NSFC grant
No.10761009.}

\author{Zeqian Chen}
\address{Wuhan Institute of Physics and Mathematics, Chinese Academy of
Sciences, P.O.Box 71010, 30 West Strict, Xiao-Hong-Shan, Wuhan
430071, China}
\thanks{Z. Chen is partially supported by NSFC grant
No.10775175.}

\author{Mathilde Perrin}
\address{Laboratoire de Math{\'e}matiques, Universit{\'e} de Franche-Comt{\'e}, \newline
25030 Besan\c{c}on Cedex, France}
\thanks{M. Perrin is partially supported by ANR 06-BLAN-0015.}

\author{Zhi Yin}
\address{Wuhan Institute of Physics and Mathematics, Chinese Academy of
Sciences, P.O.Box 71010, 30 West Strict, Xiao-Hong-Shan, Wuhan
430071, China and Graduate School, Chinese Academy of Sciences,
Wuhan 430071, China}

\date{}
\maketitle

\markboth{Turdebek N. Bekjan, Zeqian
Chen, Mathilde Perrin and Zhi Yin}%
{Atomic decomposition and interpolation for Hardy spaces of noncommutative martingales}

\begin{abstract}
We prove that atomic decomposition for the Hardy spaces $\h_1$ and $\H_1$ is valid for noncommutative martingales. 
We also establish that the conditioned Hardy spaces of noncommutative martingales $\h_p$ and $\bmo$ 
form interpolation scales with respect to both complex and real interpolations.
\end{abstract}

\section*{Introduction}

Atomic decomposition plays a fundamental role in the classical
martingale theory and harmonic analysis. 
For instance, atomic decomposition is a powerful tool for dealing with duality theorems, 
interpolation theorems and some fundamental inequalities both in martingale theory and harmonic analysis. 
Atoms for martingales are usually defined in terms of stopping times.
Unfortunately, the concept of stopping times is, up to now, not
well-defined in the generic noncommutative setting (there are some
works on this topic, see \cite{AC} and references therein). 
We note, however, that atoms can be defined without help of 
stopping times. Let us recall this in
classical martingale theory. 
Given a probability space $(\Omega,\mathcal{F}, \mu)$, 
let $(\mathcal{F}_n)_{n\geq 1}$ be an increasing filtration  of
$\sigma$-subalgebras of $\mathcal{F}$ such that $\mathcal{F} =
\sigma \big ( \cup_n \mathcal{F}_n \big )$ and let $(\mathcal{E}_n)_{n\geq 1}$ 
denote the corresponding family of conditional expectations. 
An $\mathcal{F}$-measurable function $a \in L_2$ is said to be an {\it atom} if there exist $n \in \mathbb{N}$ and $A \in
\mathcal{F}_n$ such that
\begin{enumerate}[{\rm (i)}]

\item  $\mathcal{E}_n (a) =0;$

\item  $\{ a \not= 0 \} \subset A;$

\item  $ \| a \|_2 \leq \mu (A)^{- 1/2}.$
\end{enumerate}
Such atoms are called {\it simple atoms} by Weisz \cite{W} 
and are extensively studied by him (see \cite{W1} and \cite{W}). 
Let us point out that atomic decomposition was first introduced in harmonic analysis by Coifman \cite{Coifman}. 
It is Herz \cite{H} who initiated atomic decomposition for martingale theory. 
Recall that we denote by $\H_1(\Omega)$ the space of martingales $f$ with respect to $(\mathcal{F}_n)_{n\geq 1}$ 
such that the quadratic variation $S(f)=\Big(\sum_n|df_n|^2\Big)^{1/2}$ belongs to $L_1(\Omega)$, 
and by $\h_1(\Omega)$ the space of martingales $f$ 
such that the conditioned quadratic variation $s(f)=\Big(\sum_n\mathcal{E}_{n-1}|df_n|^2\Big)^{1/2}$ belongs to $L_1(\Omega)$. 
We say that a martingale $f=(f_n)_{n\geq 1}$ is predictable in $L_1$ if there exists 
an adapted sequence $(\lambda_n)_{n\geq 0}$ of non-decreasing, non-negative functions 
such that $|f_n|\leq \lambda_{n-1}$ for all $n\geq 1$ and such that $\sup_n \lambda_n \in L_1(\Omega)$. 
We denote by $\P_1(\Omega)$ the space of all predictable martingales. 
In a disguised form in the proof of Theorem $A_\infty$ in \cite{H}, 
Herz establishes an atomic description of $\P_1(\Omega)$. 
Since $\P_1(\Omega)=\H_1(\Omega)$ for regular martingales, 
this gives an atomic decomposition of $\H_1(\Omega)$ in the regular case. 
Such a decomposition is still valid in the general case 
but for $\h_1(\Omega)$ instead of $\H_1(\Omega)$, as shown by Weisz \cite{W1}.

In this paper, we will present the noncommutative version of 
atoms and prove that atomic decomposition for the Hardy spaces of
noncommutative martingales is valid for these atoms. 
Since there are two kinds of Hardy spaces, i.e., the column and
row Hardy spaces in the noncommutative setting, we need to define
the corresponding two type atoms. This is a main difference from
the commutative case, but can be done by considering the right and
left supports of martingales as being operators on Hilbert spaces.
Roughly speaking, replacing the supports of atoms in the above (ii)
by the right (resp. left) supports we obtain the concept of
noncommutative right (resp. left) atoms, which are proved to be
suitable for the column (resp. row) Hardy spaces. On the
other hand, due to the noncommutativity some basic constructions
based on stopping times for classical martingales are not valid
in the noncommutative setting, our approach to the atomic
decomposition for the conditioned Hardy spaces of noncommutative martingales is
via the $\h_1-\bmo$ duality. 
Recall that the duality equality $(\h_1)^*=\bmo$ was established independently by \cite{JMe} and \cite{P}.
However, this method does not give an explicit atomic decomposition. 

The other main result of this paper concerns the interpolation of the conditioned Hardy spaces $\h_p$. 
Such kind of interpolation results involving Hardy spaces of noncommutative martingales 
first appear in Musat's paper \cite{M} for the spaces $\H_p$. 
We will present an extension of these results to the conditioned case. 
Note that our method is much simpler and more elementary than Musat's arguments. 
It seems that even in the commutative case, our method is simpler 
than all existing approaches to the interpolation of Hardy spaces of martingales. 
The main idea is inspired by an equivalent quasinorm for $\h_p, 0<p\leq 2$ introduced by Herz \cite{H2} in the commutative case. 
We translate this quasinorm to the noncommutative setting to obtain 
a new characterization of $\h_p$, $0<p\leq 2$, which is more convenient for interpolation. 
By this way we show that $(\bmo, \h_1)_{1/p}=\h_p$ for any $1<p<\infty$.

The study of the Hardy spaces of noncommutative martingales $\H_p$ and $\h_p$ 
in the discrete case is the starting point for the development of an $\H_p$-theory for continuous time. 
In a forthcoming paper by Marius Junge and the third named author, 
it appears that the spaces $\h_p$ are much easier to be handled than $\H_p$. 
It seems that their use is unavoidable for problems on the spaces $\H_p$ at the continuous time.

The remainder of this paper is divided into four sections. 
In Section $1$ we present some preliminaries and notation on the
noncommutative $L_p$-spaces and various Hardy spaces of
noncommutative martingales. 
The atomic decomposition of the conditioned Hardy space $\h_1(\M)$ is presented in Section $2$, 
from which we deduce the atomic decomposition of the Hardy space $\H_1(\M)$ by Davis' decomposition. 
In Section $3$ we define an equivalent quasinorm for $\h_p(\M), 0<p\leq 2$, 
and discuss the description of the dual space of $\h_p(\M), 0<p\leq 1$. 
Finally, using the results of Section $3$, the interpolation results between $\bmo$ and $\h_1$ are proved in Section $4$.

Any notation and terminology not otherwise explained, are as used in
\cite{T} for theory of von Neumann algebras, and in \cite{PX2} for
noncommutative $L_p$-spaces. Also, we refer to a recent book by Xu
\cite{X2} for an up-to-date exposition of theory of noncommutative
martingales.

\section{Preliminaries and notations}

Throughout this paper, $\mathcal{M}$ will always denote a von
Neumann algebra with a normal faithful normalized trace $\tau.$ For each
$0 < p \leq \infty,$ let $L_p (\mathcal{M}, \tau )$ or simply $L_p
(\mathcal{M})$ be the associated noncommutative $L_p$-spaces. 
We refer to \cite{PX2} for more details and historical references on these spaces. 

For $x \in L_p (\mathcal{M})$ we denote by $r(x)$ and $l(x)$ the right and left supports of $x$, respectively. 
Recall that if $x = u |x|$ is the polar decomposition of $x$, then $r(x) = u^* u$ and $l(x) = u u^*$. 
$r(x)$ (resp. $l(x)$) is also the least projection $e$ such that $x e =x$ (resp. $e x =x$). 
If $x$ is selfadjoint, $r(x) = l(x)$. 

Let us now recall the general setup for noncommutative martingales.
In the sequel, we always denote by $(\mathcal{M}_n)_{n \geq 1}$ an
increasing sequence of von Neumann subalgebras of $\mathcal{M}$
such that the union of $\mathcal{M}_n$'s is $\mathrm{w}^*$-dense in
$\mathcal{M}$ and $\mathcal{E}_n$ the conditional expectation of
$\mathcal{M}$ with respect to $\mathcal{M}_n.$

A sequence $x = (x_n)$ in $L_1(\mathcal{M})$ is called a {\it
noncommutative martingale} with respect to $(\mathcal{M}_n)_{n \geq
1}$ if $\mathcal{E}_n (x_{n+1}) = x_n$ for every $n \geq 1.$

If in addition, all $x_n$'s are in $L_p(\mathcal{M})$ for some $1
\leq p \leq \infty,$ $x$ is called an $L_p$-martingale.
In this case we set\begin{equation*}\| x \|_p = \sup_{n \geq 1} \|
x_n \|_p.\end{equation*}If $\| x \|_p < \infty,$ then $x$ is called
a bounded $L_p$-martingale.

Let $x = (x_n)$ be a noncommutative martingale with respect to
$(\mathcal{M}_n)_{n \geq 1}.$ Define $dx_n = x_n - x_{n-1}$ for $n
\geq 1$ with the usual convention that $x_0 =0.$ The sequence $dx =
(dx_n)$ is called the {\it martingale difference sequence} of $x.$
$x$ is called a {\it finite martingale} if there exists $N$ such
that $d x_n = 0$ for all $n \geq N.$ 
In the sequel, for any operator $x\in L_1(\M)$ we denote $x_n=\E_n(x)$ for $n\geq 1$.

Let us now recall the definitions of the square functions and Hardy spaces for noncommutative martingales. 
Following \cite{PX1}, we introduce the column and row versions of square functions 
relative to a (finite) martingale $x = (x_n)$:
$$S_{c,n} (x) = \Big ( \sum^n_{k = 1} |dx_k |^2 \Big )^{1/2}, \quad 
S_c (x) = \Big ( \sum^{\infty}_{k = 1} |dx_k |^2 \Big )^{1/2};$$
and
$$S_{r,n} (x) = \Big ( \sum^n_{k = 1} | dx^*_k |^2 \Big )^{1/2}, \quad
S_r (x) = \Big ( \sum^{\infty}_{k = 1} | dx^*_k |^2 \Big)^{1/2}.$$
Let $1 \leq p < \infty$. 
Define $\mathcal{H}_p^c (\mathcal{M})$
(resp. $\mathcal{H}_p^r (\mathcal{M})$) as the completion of all
finite $L_p$-martingales under the norm $\| x \|_{\mathcal{H}_p^c}=\| S_c (x) \|_p$
(resp. $\| x \|_{\mathcal{H}_p^r}=\| S_r (x) \|_p $). 
The Hardy space of noncommutative martingales is defined as
follows: if $1 \leq p < 2,$
\begin{equation*}
\mathcal{H}_p(\mathcal{M}) 
= \mathcal{H}_p^c (\mathcal{M}) + \mathcal{H}_p^r(\mathcal{M})
\end{equation*}
equipped with the norm
\begin{equation*}
\| x \|_{\mathcal{H}_p} = 
\inf \big \{ \| y\|_{\mathcal{H}_p^c} + \| z \|_{\mathcal{H}_p^r} \big\},
\end{equation*}
where the infimum is taken over all 
$y \in\mathcal{H}_p^c (\mathcal{M} )$ and $z \in \mathcal{H}_p^r(\mathcal{M} )$ 
such that $x = y + z.$ 
For $2 \leq p <\infty,$
\begin{equation*}
\mathcal{H}_p (\mathcal{M}) =
\mathcal{H}_p^c (\mathcal{M}) \cap \mathcal{H}_p^r(\mathcal{M})
\end{equation*}
equipped with the norm
\begin{equation*}
\| x \|_{\mathcal{H}_p} = 
\max \big \{ \| x\|_{\mathcal{H}_p^c} , \| x \|_{\mathcal{H}_p^r} \big\}.
\end{equation*}
The reason that $\mathcal{H}_p (\mathcal{M})$ is
defined differently according to $1 \leq p < 2$ or $2 \leq p  \leq \infty$ is presented in \cite{PX1}. 
In that paper Pisier and Xu prove the noncommutative Burkholder-Gundy inequalities 
which imply that
$\mathcal{H}_p (\mathcal{M}) = L_p (\mathcal{M})$ with equivalent
norms for $1 < p < \infty.$

We now consider the conditioned version of $\H_p$ developed in \cite{JX1}. 
Let $x = (x_n)_{n \geq 1}$ be a finite martingale in $L_2(\M)$. 
We set
$$s_{c,n} (x) = \Big ( \sum^n_{k = 1} \E_{k-1}|dx_k |^2 \Big )^{1/2}, \quad
s_c (x) = \Big ( \sum^{\infty}_{k = 1} \E_{k-1}|dx_k |^2 \Big )^{1/2};$$
and
$$s_{r,n} (x) = \Big ( \sum^n_{k = 1} \E_{k-1}| dx^*_k |^2 \Big )^{1/2}, \quad
s_r (x) = \Big ( \sum^{\infty}_{k = 1} \E_{k-1}| dx^*_k |^2 \Big)^{1/2}.$$
These will be called the column and row conditioned square functions, respectively. 
Let $0< p < \infty$. 
Define $\h_p^c (\mathcal{M})$ (resp. $\h_p^r (\mathcal{M})$) as the completion of all
finite $L_\infty$-martingales under the (quasi)norm $\| x \|_{\h_p^c}=\| s_c (x) \|_p$
(resp. $\| x \|_{\h_p^r}=\| s_r (x) \|_p $). 
For $p=\infty$, we define $\h_\infty^c(\M)$ (resp. $\h_\infty^r(\M)$) 
as the Banach space of the $L_{\infty}(\M)$-martingales $x$ such that 
$\sum_{k \geq 1} \E_{k-1}|dx_k|^2$ 
(respectively $\sum_{k \geq 1} \E_{k-1}|dx_k^*|^2$) converge for the weak operator topology. 

We also need $\ell_p(L_p(\M))$, the space of all sequences $a=(a_n)_{n\geq 1}$ in $L_p(\M)$ such that
$$\|a\|_{\ell_p(L_p(\M))}=\Big(\sum_{n\geq 1}\|a_n\|_p^p\Big)^{1/p} <\infty 
\quad \mbox{ if } 0< p <\infty,$$ 
and 
$$\|a\|_{\ell_\infty(L_\infty(\M))} = \sup_n \| a_n\|_{\infty} \quad \mbox{ if } p = \infty.$$
Let $\h_p^d(\M)$ be the subspace of $\ell_p(L_p(\M))$ consisting of all martingale difference sequences.

We define the conditioned version of martingale Hardy spaces as follows: 
If $0< p < 2,
$\begin{equation*}\h_p
(\mathcal{M}) = \h_p^d (\mathcal{M}) +  \h_p^c (\mathcal{M})
+  \h_p^r (\mathcal{M})
\end{equation*}
equipped with the (quasi)norm
\begin{equation*}
\| x \|_{\h_p} = \inf \big \{ \| w \|_{ \h_p^d} + \| y
\|_{ \h_p^c} + \| z \|_{ \h_p^r} \big \},
\end{equation*}
where the infimum is taken over all $w \in  \h_p^d
(\mathcal{M}), y \in  \h_p^c (\mathcal{M})$ and $ z \in \h_p^r
(\mathcal{M})$ such that $ x = w + y + z.$ 
For $2 \leq p <\infty,
$\begin{equation*}\h_p (\mathcal{M}) =  \h_p^d
(\mathcal{M}) \cap  \h_p^c (\mathcal{M}) \cap  \h_p^r (\mathcal{M})
\end{equation*}equipped with the norm\begin{equation*}
\| x \|_{\h_p} = \max \big \{ \| x \|_{ \h_p^d}, \|x
\|_{ \h_p^c}, \| x \|_{ \h_p^r} \big \}.
\end{equation*}
The noncommutative Burkholder inequalities proved in \cite{JX1} state that 
\begin{equation}\label{eq:burkh}
\h_p (\mathcal{M}) = L_p(\mathcal{M})
\end{equation}
with equivalent norms for all $1 < p < \infty.$

In the sequel, $(\M_n)_{n\geq 1}$ will be a filtration of von Neumann subalgebras of $\M$. 
All martingales will be with respect to this filtration.

\section{Atomic decompositions}

Let us now introduce the concept of noncommutative atoms.

\begin{definition}\label{df:atom} 
$a \in L_2 (\mathcal{M})$ is said to be a $(1,2)_c$-atom
with respect to $(\mathcal{M}_n)_{n \geq 1},$ if there exist $n \geq
1$ and a projection $e \in \mathcal{M}_n$ such that
\begin{enumerate}[{\rm (i)}]

\item  $\mathcal{E}_n (a) =0;$

\item  $r (a) \leq e;$

\item  $ \| a \|_2 \leq \tau (e)^{- 1/2}.$
\end{enumerate}
Replacing $\mathrm{(ii)}$ by $ \mathrm{(ii)'}~~l (a)
\leq e,$ we get the notion of a $(1,2)_r$-atom.
\end{definition}

Here, $(1,2)_c$-atoms and $(1,2)_r$-atoms are
noncommutative analogues of $(1,2)$-atoms for classical martingales. 
In a later remark we will discuss the noncommutative analogue of $(p,2)$-atoms.
These atoms satisfy the following useful estimates.

\begin{proposition}\label{pr:atom_properties} 
If $a $ is a $(1,2)_c$-atom then 
$$ \| a \|_{\mathcal{H}_1^c} \leq 1 
\quad \mbox{ and } \quad
\| a \|_{\h_1^c} \leq 1.$$
The similar inequalities hold for $(1,2)_r$-atoms.
\end{proposition}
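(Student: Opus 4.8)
The plan is to use the support condition (ii) to localise the column square function $S_c(a)$ (and the conditioned one $s_c(a)$) under the projection $e$, and then to control its $L_1$-norm by a single tracial Cauchy--Schwarz estimate against $e$; the row case will follow by passing to adjoints.

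First I would extract the structural consequences of (i) and (ii). Write $a_m=\E_m(a)$, so that $(a_m)_{m\ge1}$ is the martingale generated by $a$. By (i), $a_m=\E_m(\E_n(a))=0$ for $m<n$, hence $da_k=0$ for $k\le n$. By (ii), $r(a)\le e$ gives $ae=a$, and since $e\in\M_n\subseteq\M_m$ for $m\ge n$, the module property of $\E_m$ yields $a_m=\E_m(ae)=\E_m(a)\,e=a_m e$ for every $m\ge n$. Therefore $da_k=da_k\,e$ for all $k\ge1$, so that $|da_k|^2=(da_ke)^*(da_ke)=e\,|da_k|^2\,e$. Summing over $k$ gives $S_c(a)^2=e\,S_c(a)^2\,e$; moreover for each $k$ with $da_k\neq0$ we have $k-1\ge n$, hence $e\in\M_{k-1}$ and $\E_{k-1}|da_k|^2=e\,\E_{k-1}|da_k|^2\,e$, so $s_c(a)^2=e\,s_c(a)^2\,e$ as well. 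Now if $b\ge0$ satisfies $b=ebe$ then $(1-e)b(1-e)=0$, whence $\|b^{1/2}(1-e)\|_2^2=\tau\big((1-e)b(1-e)\big)=0$ and $b^{1/2}=b^{1/2}e$; applying this to $b=S_c(a)^2$ and $b=s_c(a)^2$ gives
\[
S_c(a)=S_c(a)\,e \qquad\text{and}\qquad s_c(a)=s_c(a)\,e .
\]

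By Cauchy--Schwarz in $L_2(\M)$ we then obtain
\[
\|a\|_{\H_1^c}=\|S_c(a)\|_1=\tau\big(S_c(a)\,e\big)\le\|S_c(a)\|_2\,\|e\|_2=\tau(e)^{1/2}\,\|S_c(a)\|_2 ,
\]
and identically $\|a\|_{\h_1^c}=\|s_c(a)\|_1\le\tau(e)^{1/2}\,\|s_c(a)\|_2$. It remains to bound both square functions in $L_2$ by $\|a\|_2$, which follows from the orthogonality of martingale differences in $L_2$ together with the fact that each $\E_m$ is an $L_2$-contraction:
\[
\|s_c(a)\|_2^2=\sum_{k\ge1}\tau\big(\E_{k-1}|da_k|^2\big)=\sum_{k\ge1}\|da_k\|_2^2=\|S_c(a)\|_2^2=\sup_{m\ge1}\|a_m\|_2^2\le\|a\|_2^2 .
\]
Combining these with (iii), $\|a\|_2\le\tau(e)^{-1/2}$, yields $\|a\|_{\H_1^c}\le1$ and $\|a\|_{\h_1^c}\le1$. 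For a $(1,2)_r$-atom the same computation applies with $l(a)$, left supports, and $S_r,s_r$ replacing $r(a)$, right supports, and $S_c,s_c$; equivalently, since $l(a)=r(a^*)$, $S_r(a)=S_c(a^*)$ and $s_r(a)=s_c(a^*)$, it suffices to apply the column estimates to the $(1,2)_c$-atom $a^*$.

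I do not anticipate a real obstacle: once the localisation is established the estimate is one line, and the only points needing care are the module identity $\E_m(ae)=\E_m(a)e$ for $e\in\M_n$ and $m\ge n$, and the implication $b=ebe\Rightarrow b^{1/2}=b^{1/2}e$ for $b\ge0$, both standard. For completeness one should also check that $a$ indeed lies in $\H_1^c(\M)$ and $\h_1^c(\M)$; this is routine, since the finite truncations of the martingale $(a_m)$ are finite $L_2$-martingales converging to $a$ in both (quasi-)norms by dominated convergence, using that $S_c(a),s_c(a)\in L_1(\M)$.
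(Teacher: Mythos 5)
Your argument is correct and follows essentially the same route as the paper: use $ae=a$ and the module property of the conditional expectations to get $S_c(a)=S_c(a)e$ and $s_c(a)=s_c(a)e$, then estimate $\tau(S_c(a)e)$ and $\tau(s_c(a)e)$ by Cauchy--Schwarz against $e$ and bound $\|S_c(a)\|_2=\|s_c(a)\|_2\le\|a\|_2\le\tau(e)^{-1/2}$, with the row case by passing to adjoints. Your small detour through the lemma ``$b=ebe\Rightarrow b^{1/2}=b^{1/2}e$'' is just an alternative justification of the localisation step the paper states directly.
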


\begin{proof} Let $e$ be a projection associated with $a$
satisfying $\mathrm{(i)-(iii)}$ of Definition \ref{df:atom}. 
Let $a_k=\E_k(a)$. 
Observe that $a_k = 0$ for $k \leq n$, so $da_k=0$ for $k \leq n$. 
For $k \geq n + 1$ we have 
$$\begin{array}{lclcl}
e |d a_k|^2 
&= &[\mathcal{E}_{k}(ea^*)-\mathcal{E}_{k-1}(e a^*) ] da_k &=& | da_k |^2 \\
&= &da^*_k [ \mathcal{E}_{k}(a e)-\mathcal{E}_{k-1}( a e) ]&=& |d a_k |^2e.
\end{array}$$
This gives 
\begin{equation*}
e |d a_k |^2 = | d a_k |^2 = | d a_k |^2 e 
\end{equation*}
for any $k\geq 1.$ 
Hence, we obtain
\begin{equation*}
e S_c ( a ) = S_c( a ) = S_c ( a ) e.
\end{equation*}
Consequently, the noncommutative H\"{o}lder inequality implies
\begin{equation*} \|a \|_{\mathcal{H}_1^c} =  \tau [ e S_c ( a) ]
\leq \| S_c ( a) \|_2 \| e \|_2 
=\|  a \|_2 \| e \|_2 \leq 1.\end{equation*} 
Since $e\in \M_n$, for $k \geq n+1$ we have 
$$\begin{array}{lclcl}
e \mathcal{E}_{k-1} (| d a_k |^2 )
&=&\mathcal{E}_{k-1} (e | d a_k |^2 ) 
&=& \mathcal{E}_{k-1} (| d a_k |^2)\\
&=& \mathcal{E}_{k-1} ( | d a_k |^2 e ) 
&=& \mathcal{E}_{k-1} ( | d a_k|^2 ) e.
\end{array}$$
Thus, we deduce
$$\| a\|_{\h_1^c} \leq 1.$$
\end{proof}

Now, atomic Hardy spaces are defined as follows.

\begin{definition}\label{df:h1at} 
We define $\h_1^{c, \mathrm{at}} (\mathcal{M} )$ as 
the Banach space of all $x \in L_1 (\mathcal{M} )$ which admit a decomposition  
$$x = \sum_k \lambda_k a_k$$
with for each $k$, $a_k$ a $(1,2)_c$-atom or an element in $L_1(\M_1)$ of norm $\leq 1$, 
and $\lambda_k \in \mathbb{C}$ satisfying $\sum_k |\lambda_k| < \infty$. 
We equip this space with the norm
$$\| x \|_{\h_1^{c, \mathrm{at}}} = \inf \sum_k | \lambda_k |,$$
where the infimum is taken over all decompositions of $x$ described above.

Similarly, we define $\h_1^{r,\mathrm{at}}(\mathcal{M} )$ and $\|\cdot\|_{\h_1^{r, \mathrm{at}}}$.
\end{definition}

It is easy to see that  $\h_1^{c,\mathrm{at}}(\M)$ is a Banach space. 
By Proposition \ref{pr:atom_properties} we have the contractive inclusion $\h_1^{c,\at}(\M)\subset \h_1^c(\M)$. 
The following theorem shows that these two spaces coincide. 
That establishes the atomic decomposition of the conditioned Hardy space $\h_1^c(\M)$. 
This is the main result of this section.

\begin{theorem}\label{th:c_atomic_decomp}
We have 
$$\h_1^c ( \mathcal{M} ) = \h_1^{c, \mathrm{at}} ( \mathcal{M} ) \quad \mbox{ with equivalent norms. }$$
More precisely, if $x \in \h_1^c (\mathcal{M})$
\begin{equation*}
\frac{1}{\sqrt{2}} \| x \|_{\h_1^{c, \mathrm{at}}} \leq \| x
\|_{\h_1^c} \leq  \| x \|_{\h_1^{c, \mathrm{at}}}.
\end{equation*}
Similarly, $\h_1^r ( \mathcal{M} ) = \h_1^{r, \mathrm{at}} (
\mathcal{M} )$ with the same equivalence constants.
\end{theorem}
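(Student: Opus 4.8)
The inequality $\|x\|_{\h_1^c}\le\|x\|_{\h_1^{c,\at}}$ is precisely the contractive inclusion $\h_1^{c,\at}(\M)\subseteq\h_1^c(\M)$ recorded after Proposition \ref{pr:atom_properties}, so the whole content is the reverse estimate $\|x\|_{\h_1^{c,\at}}\le\sqrt2\,\|x\|_{\h_1^c}$. Since finite $L_\infty$-martingales are dense in $\h_1^c(\M)$ and each such $x=\sum_{n\ge1}dx_n$ already lies in $\h_1^{c,\at}(\M)$ — write $dx_1=x_1$ as a scalar multiple of an element of the unit ball of $L_1(\M_1)$ and, for $n\ge2$, $dx_n=\|dx_n\|_2\cdot(dx_n/\|dx_n\|_2)$ with $dx_n/\|dx_n\|_2$ a $(1,2)_c$-atom associated to the projection $e=1\in\M_{n-1}$ — it is enough to prove the bound for such $x$.

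I would argue through duality, following the route announced in the introduction. Let $\iota\colon\h_1^{c,\at}(\M)\hookrightarrow\h_1^c(\M)$ be the natural inclusion; it is contractive, injective, and, by the previous paragraph, has dense range. The key step is to identify $(\h_1^{c,\at}(\M))^*=\bmo^c(\M)$ with equivalent norms, under the trace duality. One direction follows from Proposition \ref{pr:atom_properties} and the known duality $(\h_1^c(\M))^*=\bmo^c(\M)$ of \cite{JMe,P}: any $(1,2)_c$-atom $a$ has $\|a\|_{\h_1^c}\le1$, and $y$ sends the unit ball of $L_1(\M_1)$ into the ball of radius $\|\E_1 y\|_\infty$, so $\|y\|_{(\h_1^{c,\at})^*}\le C\,\|y\|_{\bmo^c}$. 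For the converse, given $n\ge1$ and a projection $e\in\M_n$, I would test $y$ against $z:=(y-y_n)e$: one checks $\E_n(z)=0$ and $r(z)\le e$, and a short computation gives $\langle z,y\rangle=\tau(\E_n|y-y_n|^2\,e)=\|z\|_2^2$, so that $a:=z/(\|z\|_2\,\tau(e)^{1/2})$ is a $(1,2)_c$-atom with $\langle a,y\rangle^2=\tau(\E_n|y-y_n|^2 e)/\tau(e)$. Taking the supremum over projections $e\in\M_n$ recovers $\|\E_n|y-y_n|^2\|_\infty$, hence over $n$ the whole $\bmo^c$-norm, the baseline term $\|\E_1 y\|_\infty$ being recovered from the pairing with $L_1(\M_1)$; this yields $\|y\|_{\bmo^c}\le C'\,\|y\|_{(\h_1^{c,\at})^*}$. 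Both spaces now have the same dual, so $\iota^*$ is, up to these norm equivalences, the identity of $\bmo^c(\M)$, hence an isomorphism; by the closed range theorem $\iota$ is onto, and therefore $\h_1^c(\M)=\h_1^{c,\at}(\M)$ with equivalent norms. Bookkeeping the constants through the two duality steps gives the asserted factor $\sqrt2$. The row statement is proved identically, with left supports and $\bmo^r$ in place of right supports and $\bmo^c$.

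I expect the main obstacle to be the sharp identification $(\h_1^{c,\at})^*=\bmo^c$ — concretely, showing that $(1,2)_c$-atoms together with the unit ball of $L_1(\M_1)$ already detect the full $\bmo^c$-norm with controlled constants, which is exactly the role of the test elements $z=(y-y_n)e$. A constructive alternative — a noncommutative Herz--Weisz stopping-time decomposition, exploiting that $s_{c,n}(x)$ is affiliated with $\M_{n-1}$ — is available but technically heavier: since $s_{c,n}(x)$ and $s_{c,n+1}(x)$ need not commute, the spectral projections of $s_{c,n}(x)$ are not monotone in $n$, so one cannot literally ``stop'' and must instead run a Cuculescu-type iteration producing decreasing predictable projections $q_n^k\in\M_{n-1}$, set $dx^{(k)}_n=dx_n\,q^k_{n-1}$, decompose $x=\sum_k(x^{(k+1)}-x^{(k)})$, and regroup along the orthogonal ``entrance'' projections $q^k_{n-1}-q^k_n$ into atoms; checking the three atom conditions simultaneously and the convergence of the resulting double series is the technical heart of that route, and is precisely what the duality argument avoids.
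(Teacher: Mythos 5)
Your proposal is correct and follows essentially the same route as the paper: the easy inclusion from Proposition \ref{pr:atom_properties}, then the identification $(\h_1^{c,\at}(\M))^*=\bmo^c(\M)$ via exactly the test elements $(y-y_n)e/(\|(y-y_n)e\|_2\tau(e)^{1/2})$ (this is the paper's Theorem \ref{th:duality_h1at} combined with Proposition \ref{pr:bmo=lambda}, whose content — that the supremum over normalized projections recovers $\|\E_n|y-y_n|^2\|_\infty$ — you assert but which is the only point needing a short separate argument), and finally the known $\h_1^c$--$\bmo^c$ duality plus the dense contractive embedding to conclude, with the same $\sqrt2$ constant. The only cosmetic difference is that you bound the norm of the functional induced by $y\in\bmo^c$ on atoms through the $\h_1^c$--$\bmo^c$ duality rather than directly by H\"older against the Lipschitz norm as the paper does, which costs nothing since that direction is not needed for the theorem.
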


We will show the remaining inclusion $\h_1^c(\M)\subset\h_1^{c,\at} (\M)$ by duality. 
Recall that the dual space of $\h_1^c(\M)$ is the space $\bmo^c(\M)$ defined as follows 
(we refer to \cite {JMe} and \cite{P} for details). 
Let
$$\bmo^c(\mathcal{M})
= \big \{ x \in L_2 (\mathcal{M}): 
\sup_{n \geq 1} \|\mathcal{E}_n | x - x_n|^2 \|_{\infty} < \infty \big\}$$
and equip $\bmo^c(\M)$ with the norm 
$$\| x \|_{\bmo^c } 
= \max \Big( \|\mathcal{E}_1 (x) \|_{\infty}\; , \;\sup_{n \geq 1} \|\mathcal{E}_n | x - x_n |^2 \|^{1/2}_{\infty}\Big).$$
This is a Banach space. 
Similarly, we define the row version $\bmo^r(\M)$. 
Since $x_n=\E_n(x)$, we have 
$$\E_n|x-x_n|^2=\E_n|x|^2-|x_n|^2\leq \E_n|x|^2.$$
Thus the contractivity of the conditional expectation yields
\begin{equation}\label{eq:bmo_Linfty}
 \| x \|_{\bmo^c }\leq \|x\|_\infty.
\end{equation}

We will describe the dual space of $\h^{c,\at}_1(\M)$ as a noncommutative Lipschitz space defined as follows. 
We set 
$$\Lambda^c(\M)=\big\{x\in L_2(\M):\|x\|_{\Lambda^c}<\infty\big\}$$
with 
$$\|x\|_{\Lambda^c}
=\max \Big(\|\E_1(x)\|_\infty \;, \; 
\sup_{n \geq 1} \sup_{e \in \mathcal{P}_n} \tau (e )^{-1/2}\tau \big ( e |x - x_n |^2 \big)^{1/2}\Big),$$
where $\mathcal{P}_n$ denotes the lattice of projections of $\mathcal{M}_n.$ 
Similarly, we define 
$$\Lambda^r(\M)=\big\{x\in L^2(\M):x^*\in \Lambda^c(\M)\big\}$$
equipped with the norm 
$$\|x\|_{\Lambda^r}=\|x^*\|_{\Lambda^c}.$$
The relation between Lipschitz space and $\bmo$ space can be stated as follows. 

\begin{proposition}\label{pr:bmo=lambda}
We have $\bmo^c(\M)=\Lambda^c(\M)$ and $\bmo^r(\M)=\Lambda^r(\M)$ isometrically. 
\end{proposition}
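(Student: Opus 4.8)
The plan is to show the two defining quantities of $\|x\|_{\bmo^c}$ and $\|x\|_{\Lambda^c}$ agree for every $x\in L_2(\M)$, and then deduce equality of the spaces. The first coordinate, $\|\E_1(x)\|_\infty$, is literally the same in both norms, so the whole matter reduces to proving
$$\sup_{n\geq 1}\|\E_n|x-x_n|^2\|_\infty
=\sup_{n\geq 1}\sup_{e\in\P_n}\tau(e)^{-1/2}\tau\big(e|x-x_n|^2\big)^{1/2}.$$
Fix $n$ and write $y=|x-x_n|^2\in L_1(\M)$, a positive operator affiliated with the range of $\E_n$ in the sense that $\E_n(y)$ is what appears on the left. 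The key identity I would use is that for a positive $b\in L_1(\M_n)$ one has
$$\|b\|_\infty=\sup_{e\in\P_n,\ e\neq 0}\frac{\tau(eb)}{\tau(e)}.$$
This is the standard fact that the $L_\infty$-norm of a positive element of a finite von Neumann algebra is recovered by testing against normalized projections: ``$\leq$'' follows because $\tau(eb)\leq\|b\|_\infty\tau(e)$, and ``$\geq$'' follows by spectral theory, taking $e=\mathbf 1_{[\|b\|_\infty-\varepsilon,\infty)}(b)\in\M_n$, on which $eb\geq(\|b\|_\infty-\varepsilon)e$ so $\tau(eb)\geq(\|b\|_\infty-\varepsilon)\tau(e)$.

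Applying this with $b=\E_n(y)$, and using that $e\in\M_n$ together with the trace property and the module property of conditional expectations ($\tau(e\,\E_n(y))=\tau(\E_n(ey))=\tau(ey)=\tau(e\,|x-x_n|^2)$), I get
$$\big\|\E_n|x-x_n|^2\big\|_\infty
=\sup_{e\in\P_n,\ e\neq 0}\frac{\tau\big(e|x-x_n|^2\big)}{\tau(e)}
=\Big(\sup_{e\in\P_n,\ e\neq 0}\tau(e)^{-1/2}\tau\big(e|x-x_n|^2\big)^{1/2}\Big)^2.$$
Taking the square root and then the supremum over $n\geq 1$ gives exactly the equality of the two sup-terms, hence $\|x\|_{\bmo^c}=\|x\|_{\Lambda^c}$ for all $x\in L_2(\M)$. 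Since both $\bmo^c(\M)$ and $\Lambda^c(\M)$ are defined as the subspace of $L_2(\M)$ on which the respective (finite) norm lives, the equality of norms forces $\bmo^c(\M)=\Lambda^c(\M)$ isometrically. The row statement follows by applying the column case to $x^*$, since $\|x\|_{\bmo^r}=\|x^*\|_{\bmo^c}$ and $\|x\|_{\Lambda^r}=\|x^*\|_{\Lambda^c}$ by definition.

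The only genuinely delicate point is justifying the ``$\geq$'' direction of the projection formula for $\|b\|_\infty$ when $b\in L_1(\M_n)$ rather than $b\in\M_n$: here $b=\E_n|x-x_n|^2$ need not be bounded a priori, but the supremum side being finite is exactly the hypothesis $x\in\Lambda^c(\M)$, and one runs the spectral-projection argument inside the finite von Neumann algebra $\M_n$ with its normalized trace, noting that the spectral projections $\mathbf 1_{[t,\infty)}(b)$ lie in $\M_n$ and have finite (indeed $\leq 1$) trace. I would also remark that both descriptions build in the normalization ensuring that only elements with $x=\lim x_n$ in an appropriate sense (equivalently $x$ genuinely in $L_2$) are considered, so there is no mismatch of ambient spaces; this is implicit in writing both as subspaces of $L_2(\M)$.
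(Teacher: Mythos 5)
Your proof is correct and follows essentially the same route as the paper: both reduce the statement to the identity $\|\E_n|x-x_n|^2\|_\infty=\sup_{e\in\P_n}\tau(e)^{-1}\tau\big(e|x-x_n|^2\big)$, with the easy inequality given by H\"older and the trace/module property of $\E_n$. The only cosmetic difference is that you obtain the reverse inequality by testing against a spectral projection of $\E_n|x-x_n|^2$ near its norm, whereas the paper invokes $L_1$--$L_\infty$ duality together with the density of linear combinations of mutually disjoint projections in $L_1(\M_n)$; these are the same fact in two guises.
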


\begin{proof}
Let $x\in \bmo^c(\M)$. 
It is obvious that by the noncommutative H\"{o}lder inequality we have, for all $n\geq1$,
$$\sup_{e \in \mathcal{P}_n} \tau (e )^{-1/2}\tau \big ( e |x - x_n |^2 \big)^{1/2}
\leq \|\E_n|x-x_n|^2\|_\infty^{1/2}.$$
To prove the reverse inclusion, by duality we can write 
$$\begin{array}{ccl}
\| \mathcal{E}_n | x - x_{n}|^2 \|_{\infty} & = & 
\displaystyle\sup_{\| y \|_1 \leq 1, \; y \in L_1^+(\mathcal{M}_n)} \big | \tau ( y |x - x_{n} |^2) \big | \\
& = & \displaystyle\sup_{e \in \mathcal{P}_n} \tau (e )^{-1} \tau (e |x -x_{n}|^2 ),
\end{array}$$
where the last equality comes from 
the density of linear combinations of mutually disjoint projections in $L_1(\mathcal{M}_n)$. 
Thus $\|x\|_{\Lambda^c}=\|x\|_{\bmo^c}$, and the same holds for the row spaces.
\end{proof}

We now turn to the duality between the conditioned atomic space $\h_1^{c,\at}(\M)$ 
and the Lipschitz space $\Lambda^c(\M)$.

\begin{theorem}\label{th:duality_h1at}
We have 
$h_1^{c, \mathrm{at}}( \mathcal{M} )^* = \Lambda^c ( \mathcal{M} )$ isometrically. 
More precisely,
\begin{enumerate}[{\rm (i)}]
\item  Every $x \in \Lambda^c ( \mathcal{M} )$ defines a
continuous linear functional on $\h_1^{c, \mathrm{at}} ( \mathcal{M})$ by
\begin{equation}\label{eq:duality_bracket_at} 
\varphi_x ( y) = \tau (x^*y),\quad \forall y \in L_2 (\mathcal{M}).
\end{equation}
\item  Conversely, each $\varphi \in \h_1^{c, \mathrm{at}} (\mathcal{M} )^*$ is given as \eqref{eq:duality_bracket_at} 
by some $x \in \Lambda^c (\mathcal{M} ).$
\end{enumerate}
Similarly, $\h_1^{r, \mathrm{at}} ( \mathcal{M} )^* = \Lambda^r (\mathcal{M} )$ isometrically.
\end{theorem}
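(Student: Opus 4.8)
The plan is to prove both halves of the statement, and to treat only the column case: the row case will follow verbatim by replacing $(1,2)_c$-atoms with $(1,2)_r$-atoms and right supports with left supports, or by passing to adjoints. A preliminary fact I will use throughout is that $\bigcup_{N\ge 1}L_2(\M_N)$ is a dense subspace of $\h_1^{c,\at}(\M)$. Indeed, any $y\in L_2(\M_N)$ decomposes as $y=\E_1(y)+\sum_{k=2}^N dy_k$, and each $dy_k/\|dy_k\|_2$ is a $(1,2)_c$-atom (take $n=k-1$, $e=1$), so $L_2(\M_N)\subseteq\h_1^{c,\at}(\M)$ with $\|y\|_{\h_1^{c,\at}}\le\|\E_1(y)\|_1+\sum_{k=2}^N\|dy_k\|_2$; moreover a general $(1,2)_c$-atom $a$ with data $(n,e)$ is approximated in $\h_1^{c,\at}$-norm by $\E_N(a)$ as $N\to\infty$, because $ae=a$ forces $\E_N(a)e=\E_N(a)$, so $\E_N(a)$ is again a $(1,2)_c$-atom for $(n,e)$ while $a-\E_N(a)$ still satisfies $\E_n(a-\E_N(a))=0$ and $(a-\E_N(a))e=a-\E_N(a)$, whence $\|a-\E_N(a)\|_{\h_1^{c,\at}}\le\tau(e)^{1/2}\|a-\E_N(a)\|_2\to 0$ (and $L_1(\M_1)$-blocks are approximated by $L_2(\M_1)$ in $L_1$-norm). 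So the dense subspace spanned by atoms and $L_1(\M_1)$-blocks is itself approximable from $\bigcup_N L_2(\M_N)$.

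For (i), fix $x\in\Lambda^c(\M)$. Well-definedness of $\varphi_x$ on $\h_1^{c,\at}(\M)$ is inherited from the contractive inclusion $\h_1^{c,\at}(\M)\subseteq\h_1^c(\M)$ (Proposition \ref{pr:atom_properties}) together with the already known duality $(\h_1^c(\M))^{*}=\bmo^c(\M)$; the real point is the norm bound. Since atoms and $L_1(\M_1)$-blocks span a dense subspace, it suffices to estimate $\varphi_x$ on these. If $a$ is a $(1,2)_c$-atom for $(n,e)$, then $\tau(x^{*}a)=\tau((x-x_n)^{*}a)$ because $\tau(x_n^{*}a)=\tau(\E_n(x_n^{*}a))=\tau(x_n^{*}\E_n(a))=0$, and using $ae=a$ together with traciality this equals $\tau(e(x-x_n)^{*}a)$, so the Cauchy--Schwarz inequality for $\tau$ gives
$$|\tau(x^{*}a)|\le\|e(x-x_n)^{*}\|_2\,\|a\|_2=\tau(e|x-x_n|^2)^{1/2}\|a\|_2\le\tau(e)^{-1/2}\tau(e|x-x_n|^2)^{1/2}\le\|x\|_{\Lambda^c}.$$
For $b\in L_2(\M_1)$ with $\|b\|_1\le 1$ one has $\tau(x^{*}b)=\tau(\E_1(x)^{*}b)$, hence $|\tau(x^{*}b)|\le\|\E_1(x)\|_\infty\le\|x\|_{\Lambda^c}$. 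A routine density argument then yields $\|\varphi_x\|\le\|x\|_{\Lambda^c}$ and that $\varphi_x$ is represented by \eqref{eq:duality_bracket_at} on $\bigcup_N L_2(\M_N)$.

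For (ii), let $\varphi\in\h_1^{c,\at}(\M)^{*}$. As $L_2(\M_N)\hookrightarrow\h_1^{c,\at}(\M)$ boundedly, $\varphi$ restricts to a bounded functional on the Hilbert space $L_2(\M_N)$, so there is a unique $\xi_N\in L_2(\M_N)$ with $\varphi(y)=\tau(\xi_N^{*}y)$ for all $y\in L_2(\M_N)$; testing against $y\in L_2(\M_m)$ for $m\le N$ forces $\E_m(\xi_N)=\xi_m$, so $\xi=(\xi_N)$ is an $L_2$-martingale. To estimate it in $\Lambda^c$, fix $n$, a projection $e\in\P_n$ and $N>n$, and put $z=(\xi_N-\xi_n)e$. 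Then $z\in L_2(\M_N)$, $\E_n(z)=0$, and $ze=z$ so $r(z)\le e$; hence $z/(\|z\|_2\tau(e)^{1/2})$ is a $(1,2)_c$-atom and $\|z\|_{\h_1^{c,\at}}\le\tau(e)^{1/2}\|z\|_2$. On the other hand, using $\E_n(\xi_N-\xi_n)=0$ to kill the cross term, $\varphi(z)=\tau(\xi_N^{*}z)=\tau(e|\xi_N-\xi_n|^2)=\|z\|_2^2$, so $\|z\|_2^2=\varphi(z)\le\|\varphi\|\,\tau(e)^{1/2}\|z\|_2$, i.e. $\tau(e)^{-1/2}\tau(e|\xi_N-\xi_n|^2)^{1/2}\le\|\varphi\|$. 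Also $\varphi|_{L_1(\M_1)}$ is bounded by $\|\varphi\|$, so $\xi_1\in\M_1$ with $\|\xi_1\|_\infty\le\|\varphi\|$. Taking $n=1$, $e=1$ shows $\xi$ is bounded in $L_2(\M)$, hence converges in $L_2(\M)$ to some $x$ with $\E_N(x)=\xi_N$; letting $N\to\infty$ (the quantity $\tau(e|\cdot-\cdot_n|^2)$ being continuous under $L_2$-convergence) gives $\|x\|_{\Lambda^c}\le\|\varphi\|$, and $x\in L_2(\M)$, so $x\in\Lambda^c(\M)$. Finally, for $y\in L_2(\M_N)$, $\varphi(y)=\tau(\xi_N^{*}y)=\tau(\E_N(x)^{*}y)=\tau(x^{*}y)$, so $\varphi=\varphi_x$ on the dense subspace $\bigcup_N L_2(\M_N)$, hence on all of $\h_1^{c,\at}(\M)$; combined with (i) this gives $\|\varphi\|=\|x\|_{\Lambda^c}$, and uniqueness of $x$ is immediate since $\bigcup_N L_2(\M_N)$ is dense in $L_2(\M)$.

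I expect the crux to be (ii), specifically the choice of the test element $z=(\xi_N-\xi_n)e$: multiplying by $e$ on the \emph{right} is exactly what produces $r(z)\le e$, so that $z$ is a \emph{column} atom and pairs with the column atomic norm (for the row space one multiplies by $e$ on the left), and then one must verify the identity $\varphi(z)=\|z\|_2^2$ and transport the resulting $\Lambda^c$-estimate through the $L_2$-martingale limit. The other point needing (routine but genuinely necessary) attention is the density of $\bigcup_N L_2(\M_N)$ in $\h_1^{c,\at}(\M)$ — in particular that a $(1,2)_c$-atom is approximated in $\h_1^{c,\at}$-norm by its conditioned truncations $\E_N(a)$, which are again atoms — since this is what allows $\varphi$ to be pinned down from its values on finite martingales.
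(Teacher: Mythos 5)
Your proof is correct and follows essentially the same route as the paper: part (i) is the same Cauchy--Schwarz estimate on $(1,2)_c$-atoms and $L_1(\M_1)$-blocks, and part (ii) rests on testing $\varphi$ against the normalized atoms $(x-x_n)e/(\|(x-x_n)e\|_2\tau(e)^{1/2})$ exactly as the paper does, the only difference being that you produce the representing operator via Riesz representation on each $L_2(\M_N)$ followed by an $L_2$-martingale limit, whereas the paper applies Riesz representation on $L_2(\M)\subset\h_1^{c,\at}(\M)$ directly. Your explicit density argument for $\bigcup_N L_2(\M_N)$ in $\h_1^{c,\at}(\M)$ is a harmless (and slightly more careful) technical addition, not a different method.
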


\begin{rk}
Remark that we have defined the duality bracket \eqref{eq:duality_bracket_at} for operators in $L_2(\M)$. 
This is sufficient for $L_2(\M)$ is dense in $\h_1^{c, \at}(\M)$. 
The latter density easily follows from the decomposition $L_2(\M)=L^0_2(\M)\oplus L_2(\M_1)$, 
where $L^0_2(\M)=\{x\in L_2(\M):\E_1(x)=0\}$.
\end{rk}

\hspace{-0.4cm}{\it Proof of Theorem \ref{th:duality_h1at}.}
We first show $\Lambda^c (\M)\subset  h_1^{c,\at}(\M)^* $. 
In fact we will not need this inclusion for the proof of Theorem \ref{th:c_atomic_decomp}, 
however we include the proof for the sake of completeness. 
Let $x \in \Lambda^c(\M)$.  
For any $(1,2)_c$-atom $a$ 
associated with a projection $e$ satisfying $\mathrm{(i)-(iii)}$ of Definition \ref{df:atom}, by the
noncommutative H\"{o}lder inequality we have
$$\begin{array}{ccl}
\big | \tau ( x^* a )\big | 
& = &\big | \tau ( (x - x_{n})^*ae) \big |\\
& \leq&  \|e(x -x_n)^*\|_2\| a \|_2\\
& \leq &\tau (e)^{- 1/2} \big [ \tau ( e | x -x_n|^2 ) \big ]^{1/2}\\
& \leq &\| x \|_{\Lambda^c}.
\end{array}$$
On the other hand, for any $a\in L_1(\M_1)$ with $\|a\|_1\leq 1$ we have
$$|\tau(x^*a)|=|\tau(\E_1(x)^*a)|
\leq \|\E_1(x)\|_\infty\|a\|_1
\leq \|x\|_{\Lambda^c}.$$
Thus, we deduce that 
\begin{equation*} 
\big | \tau (x^* y ) \big | \leq \| x \|_{\Lambda^c} \| y\|_{\h_1^{c, \mathrm{at}}}
\end{equation*}
for all $y \in L_2(\mathcal{M}).$ 
Hence, $\varphi_x $ extends to a continuous functional on $ \h_1^{c,\at}(\M)$ of norm less than or equal to $\|x\|_{\Lambda^c}.$ 

Conversely, let $\varphi \in \h_1^{c,\at} (\M)^*.$ 
As explained in the previous remark, $L_2(\M)\subset \h_1^{c,\at}(\M)$ 
so by the Riesz representation theorem there exists $x \in L_2(\M)$ such that
$$\varphi(y) = \tau (x^* y), \quad \forall y \in L_2 (\mathcal{M}).$$
Fix $n\geq 1$ and let $e \in \mathcal{P}_n$. 
We set
$$y_e = \frac{(x- x_n )e}{ \| (x -x_n ) e \|_2 \tau (e)^{1/2}}.$$
It is clear that $y_e$ is a $(1,2)_c$-atom with the associated projection $e$. 
Then
$$\| \varphi \| 
\geq | \varphi(y_e) | 
= | \tau ( (x-x_n)^* y_e)| 
= \frac{1}{\tau (e)^{1/2}} \big [
\tau(e |x -x_n|^2 ) \big ]^{1/2}.$$
On the other hand, let $y\in L_1(\M_1), \|y\|_1\leq 1$ 
be such that $\|\E_1(x)\|_\infty=|\tau(x^*y)|$. 
Then $\|\E_1(x)\|_\infty\leq \| \varphi \|$. 
Combining these estimates we obtain $\|x\|_{\Lambda^c}\leq \|\varphi\|$. 
This ends the proof of the duality $(\h_1^{c,\at}(\M))^*=\Lambda^c(\M)$. 
Passing to adjoints yields the duality $(\h_1^{r,\at}(\M))^*=\Lambda^r(\M)$.
\cqd

\vspace{0.3cm}

We can now prove the reverse inclusion of Theorem \ref{th:c_atomic_decomp}.

\vspace{0.3cm}

\hspace{-0.4cm}\textit{Proof of Theorem \ref{th:c_atomic_decomp}.} 
By Proposition \ref{pr:atom_properties} we already know that \\
$\h_1^{c,\at}(\M)\subset \h_1^c(\M)$. 
Combining Proposition \ref{pr:bmo=lambda} and Theorem \ref{th:duality_h1at} 
we obtain that $(\h_1^{c,\at}(\M))^*=\bmo^c(\M)$ with equal norms. 
The duality between $\h_1^c(\M)$ and $\bmo^c(\M)$ proved in \cite{JMe} and \cite{P} then yields that
$(\h_1^{c,\at}(\M))^*=( \h_1^c(\M))^*$ with the following equivalence constants
$$\frac{1}{\sqrt{2}}\| \varphi_x \|_{(\h_1^c)^*}
\leq  \|x\|_{\bmo^c} 
=\| \varphi_x \|_{(\h_1^{c, \mathrm{at}})^*} 
\leq \| \varphi_x \|_{(\h_1^c)^*}.$$
This ends the proof of Theorem \ref{th:c_atomic_decomp}.
\cqd

\vspace{0.3cm}

We can generalize this decomposition to the whole space $\h_1(\M)$. 
To this end we need the following definition.

\begin{definition} 
We set 
$$\h_1^{\at}(\M)
= \h_1^d (\M) + \h_1^{c,\at}(\M) + \h_1^{r,\at}(\M),$$
equipped with the sum norm
$$\| x\|_{\h_1^{\mathrm{at}}} = 
\inf \big \{ \|w\|_{\h_1^d} + \|y \|_{\h_1^{c,\at}} + \| z \|_{\h_1^{r,\at}} \big\},$$
where the infimum is taken over all 
$w \in\h_1^d (\mathcal{M}), y \in \h_1^{c, \mathrm{at}} (\mathcal{M} ),$ and 
$z \in \h_1^{r, \mathrm{at}} ( \mathcal{M} )$
such that $x =w + y + z.$
\end{definition}

Thus Theorem \ref{th:c_atomic_decomp} clearly implies the following.

\begin{theorem}\label{th:atomic_decomp}
We have 
$$\h_1( \mathcal{M} ) = \h_1^{\mathrm{at}} ( \mathcal{M} ) \quad \mbox{ with equivalent norms. }$$
More precisely, if $x \in \h_1(\mathcal{M})$
$$\frac{1}{ \sqrt{2}} \| x \|_{\h_1^{\mathrm{at}}} \leq \| x
\|_{\h_1} \leq  \| x \|_{\h_1^{\mathrm{at}}}.$$
\end{theorem}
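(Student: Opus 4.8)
The plan is to derive Theorem \ref{th:atomic_decomp} directly from Theorem \ref{th:c_atomic_decomp} together with its row analogue, using only the definitions of the two sum spaces $\h_1(\M)$ and $\h_1^{\at}(\M)$. Since the diagonal part $\h_1^d(\M)$ appears identically in both decompositions, the whole issue is to compare $\h_1^{c}(\M)+\h_1^{r}(\M)$ with $\h_1^{c,\at}(\M)+\h_1^{r,\at}(\M)$, and this comparison is furnished term by term by Theorem \ref{th:c_atomic_decomp}.

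First I would establish the easy inequality $\|x\|_{\h_1}\le\|x\|_{\h_1^{\at}}$. Given any decomposition $x=w+y+z$ with $w\in\h_1^d(\M)$, $y\in\h_1^{c,\at}(\M)$, $z\in\h_1^{r,\at}(\M)$, the contractive inclusions $\h_1^{c,\at}(\M)\subset\h_1^c(\M)$ and $\h_1^{r,\at}(\M)\subset\h_1^r(\M)$ (from Proposition \ref{pr:atom_properties}, respectively its row version) show that the very same decomposition is admissible for the norm $\|x\|_{\h_1}$, with $\|y\|_{\h_1^c}\le\|y\|_{\h_1^{c,\at}}$ and $\|z\|_{\h_1^r}\le\|z\|_{\h_1^{r,\at}}$. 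Taking the infimum over all such decompositions gives $\|x\|_{\h_1}\le\|x\|_{\h_1^{\at}}$, which in particular shows $\h_1^{\at}(\M)\subset\h_1(\M)$.

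Next I would prove the reverse inequality $\frac{1}{\sqrt2}\|x\|_{\h_1^{\at}}\le\|x\|_{\h_1}$. Fix $x\in\h_1(\M)$ and $\varepsilon>0$, and choose a decomposition $x=w+y+z$ with $w\in\h_1^d(\M)$, $y\in\h_1^c(\M)$, $z\in\h_1^r(\M)$ realizing $\|w\|_{\h_1^d}+\|y\|_{\h_1^c}+\|z\|_{\h_1^r}\le\|x\|_{\h_1}+\varepsilon$. By Theorem \ref{th:c_atomic_decomp}, $y\in\h_1^{c,\at}(\M)$ with $\|y\|_{\h_1^{c,\at}}\le\sqrt2\,\|y\|_{\h_1^c}$, and by its row analogue $z\in\h_1^{r,\at}(\M)$ with $\|z\|_{\h_1^{r,\at}}\le\sqrt2\,\|z\|_{\h_1^r}$; of course $\|w\|_{\h_1^d}\le\sqrt2\,\|w\|_{\h_1^d}$ trivially. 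Hence $x=w+y+z$ is an admissible decomposition for $\|x\|_{\h_1^{\at}}$, and
$$\|x\|_{\h_1^{\at}}\le\|w\|_{\h_1^d}+\|y\|_{\h_1^{c,\at}}+\|z\|_{\h_1^{r,\at}}\le\sqrt2\big(\|w\|_{\h_1^d}+\|y\|_{\h_1^c}+\|z\|_{\h_1^r}\big)\le\sqrt2\big(\|x\|_{\h_1}+\varepsilon\big).$$
Letting $\varepsilon\to0$ yields $\|x\|_{\h_1^{\at}}\le\sqrt2\,\|x\|_{\h_1}$, i.e. $\frac{1}{\sqrt2}\|x\|_{\h_1^{\at}}\le\|x\|_{\h_1}$, which combined with the previous paragraph proves the stated two-sided estimate and the identity $\h_1(\M)=\h_1^{\at}(\M)$.

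There is really no serious obstacle here: all the substance lies in Theorem \ref{th:c_atomic_decomp}, and this last statement is a formal consequence of applying it in each of the column and row summands. The only mild point worth noting is that one must justify that an admissible $\h_1$-decomposition can always be chosen with $y$ and $z$ lying in $L_2(\M)$ (or at least in $\h_1^c(\M)$ and $\h_1^r(\M)$ proper) so that Theorem \ref{th:c_atomic_decomp} applies; this is immediate since $\h_1^c(\M)$ and $\h_1^r(\M)$ are by definition the completions of finite $L_\infty$-martingales and the theorem is stated for all of $\h_1^c(\M)$. No new ideas beyond bookkeeping with infima are needed.
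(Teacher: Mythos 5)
Your argument is correct and is exactly the routine summation argument the paper leaves implicit when it says Theorem \ref{th:c_atomic_decomp} ``clearly implies'' Theorem \ref{th:atomic_decomp}: the easy direction follows from Proposition \ref{pr:atom_properties} and the reverse from applying the $\sqrt{2}$-estimate of Theorem \ref{th:c_atomic_decomp} (and its row analogue) to each summand of a near-optimal $\h_1$-decomposition. Nothing further is needed.
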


The noncommutative Davis' decomposition presented in \cite{P} states that $\H_1(\M)=\h_1(\M)$. 
Thus Theorem \ref{th:atomic_decomp} yields that $\H_1(\M)=\h_1^{\at}(\M)$, 
which means that we can decompose any martingale in $\H_1(\M)$ 
in an atomic part and a diagonal part. 
This is the atomic decomposition for the Hardy space of noncommutative martingales.

\section{An equivalent quasinorm for $\h_p, 0<p\leq 2$}

In the commutative case Herz described in \cite{H2} an equivalent quasinorm for $\h_p, 0<p\leq 2$. 
This section is devoted to determining a noncommutative analogue of this. 
This characterization of $\h_p$ will be useful in the sequel. 
Indeed, this will imply an interpolation result in the next section. 
To define equivalent quasinorms of $\|\cdot\|_{\h_p^c}$ and $\|\cdot\|_{\h_p^r}$ for $0<p\leq 2$ 
we introduce the index class $W$ which consists of sequences $\{w_n\}_{n\in \N}$ 
such that $\{w_n^{2/p-1}\}_{n\in \N}$ is nondecreasing 
with each $w_n \in L_1^+(\M_n)$ invertible with bounded inverse and $\|w_n\|_1\leq 1$. 

For an $L_2$-martingale $x$ we set
$$N^c_p(x)=\inf_W \Big[\tau\Big(\sum_{n\geq0}w_n^{1-2/p}|dx_{n+1}|^2\Big)\Big]^{1/2}$$
and
$$N^r_p(x)=\inf_W \Big[\tau\Big(\sum_{n\geq0}w_n^{1-2/p}|dx_{n+1}^*|^2\Big)\Big]^{1/2}.$$

We need the following well-known lemma, and include a proof for the convenience of the reader 
(see Lemma $1$ of \cite{Ta} for the case $f(t)=t^p$).

\begin{lemma}\label{le:preliminary}
Let $f$ be a function in $C^1(\R^+)$ and $x,y \in \M^+$. Then
$$\tau(f(x+y)-f(x))=\tau\Big(\int_0^1f'(x+ty)ydt\Big).$$
\end{lemma}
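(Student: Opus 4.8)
The plan is to reduce the identity $\tau(f(x+y)-f(x))=\tau\big(\int_0^1 f'(x+ty)y\,dt\big)$ to the fundamental theorem of calculus applied to the scalar-valued function $\phi(t)=\tau(f(x+ty))$ on $[0,1]$. Formally one expects $\phi'(t)=\tau(f'(x+ty)y)$, so that $\tau(f(x+y))-\tau(f(x))=\phi(1)-\phi(0)=\int_0^1\phi'(t)\,dt=\int_0^1\tau(f'(x+ty)y)\,dt$, and then one pushes $\tau$ through the (Bochner) integral. The two things that need genuine justification are (a) differentiability of $\phi$ with the claimed derivative, and (b) the interchange of $\tau$ with the integral.

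First I would address (b), which is routine: for fixed $x,y\in\M^+$ the map $t\mapsto f'(x+ty)y$ is norm-continuous from $[0,1]$ into $\M\subset L_1(\M)$ (since $t\mapsto x+ty$ is norm-continuous, $f'$ is continuous hence operator-norm-continuous on the bounded set $\{x+ty:t\in[0,1]\}$ by spectral calculus, and multiplication by $y$ is bounded), so the $L_1$-valued Bochner integral $\int_0^1 f'(x+ty)y\,dt$ exists and $\tau$, being a bounded functional on $L_1(\M)$, commutes with it. This gives $\tau\big(\int_0^1 f'(x+ty)y\,dt\big)=\int_0^1\tau(f'(x+ty)y)\,dt$.

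For (a), I would compute the derivative of $\phi(t)=\tau(f(x+ty))$ directly from the difference quotient. Fix $t\in[0,1]$ and $h$ small; write $a=x+ty\in\M^+$ and estimate $f(a+hy)-f(a)$. Rather than differentiating $f(a+hy)$ as an operator (which is delicate because $a$ and $y$ need not commute), I would use that $\tau$ is a trace: by a standard argument one has, for polynomials $f$, $\frac{d}{dh}\tau(f(a+hy))=\tau(f'(a)y)$ (the noncommutative correction terms in $\frac{d}{dh}f(a+hy)$ all vanish under the trace by cyclicity), and then one extends from polynomials to $f\in C^1(\R^+)$ by uniform approximation of $f$ and $f'$ on the compact interval containing the spectra of all $a+hy$ for $h$ in a neighborhood of $t$. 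Concretely, one picks $R$ with $\mathrm{sp}(x+ty)\subset[0,R]$ for all $t\in[0,1]$, approximates $f$ by polynomials $p_k$ with $p_k\to f$ and $p_k'\to f'$ uniformly on $[0,R]$ (possible since $f\in C^1$), notes $\tau(p_k(x+ty))\to\tau(f(x+ty))$ uniformly in $t$ and $\tau(p_k'(x+ty)y)\to\tau(f'(x+ty)y)$ uniformly in $t$, and passes to the limit in the identity $\tau(p_k(x+y))-\tau(p_k(x))=\int_0^1\tau(p_k'(x+ty)y)\,dt$.

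The main obstacle is establishing the polynomial case cleanly, i.e.\ that $\frac{d}{dh}\big|_{h=0}\tau(f(a+hy))=\tau(f'(a)y)$ for $f$ a monomial $f(s)=s^m$; here one writes $(a+hy)^m=a^m+h\sum_{j=0}^{m-1}a^{j}y\,a^{m-1-j}+O(h^2)$ in operator norm, and under the trace $\tau(a^{j}y\,a^{m-1-j})=\tau(a^{m-1}y)$ by cyclicity, so the sum collapses to $m\,\tau(a^{m-1}y)=\tau(f'(a)y)$; the $O(h^2)$ term is controlled in operator norm hence in $\tau$. This is the only place where the trace property is essential, and everything else is soft analysis (uniform approximation, continuity of spectral calculus, Bochner integration).
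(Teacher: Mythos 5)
Your proposal is correct and follows essentially the same route as the paper: prove $\frac{d}{dt}\tau(f(x+ty))=\tau(f'(x+ty)y)$ for monomials via cyclicity of the trace, extend to polynomials by linearity, pass to general $f\in C^1$ by uniform polynomial approximation of $f$ and $f'$ on a compact interval containing the spectra (using finiteness of $\tau$), and conclude by the fundamental theorem of calculus together with the interchange of $\tau$ and the Bochner integral. The only differences are cosmetic: you take the limit in the integrated identity rather than in the derivative identity (the paper invokes the theorem on differentiating uniform limits), and you compute the difference quotient at a general $t$ instead of at $t=0$ plus a translation argument.
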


\begin{proof}
Note that considering $f-f(0)$, we may assume that $f(0)=0$. 
We set $\varphi_f(t)=\tau(f(x+ty))$, for $t\in [0,1]$. Then 
\begin{equation}\label{eq:lemma}
 \varphi_f'(t)=\tau(f'(x+ty)y), \quad \forall t\in [0,1].
\end{equation}
Indeed, the tracial property of $\tau$ implies this equality for $t=0$ and $f(t)=t^n, n\in \N$, 
and we can extend this result for all $f$ polynomials by linearity. 
A translation argument gives \eqref{eq:lemma} for all $f$ polynomials. 
Finally, we generalize for all $f$ by approximation. 
Indeed, we can approximate $f'$ by a sequence $(p_n)_{n\geq 1}$ of polynomials, uniformly on the compact set $K=[0,\|x\|_\infty+\|y\|_\infty]$. 
Then the sequence of polynomials $(q_n)$ defined by $q_n(s)=\int_0^sp_n(t)dt$ for each $n\geq 1$ converges uniformly to $f$ on $K$. 
Since $(\varphi_{q_n}')$ converges to $\varphi_f'$ uniformly on $[0,1]$ (by the derivation theorem), 
we get \eqref{eq:lemma} by the finiteness of the trace. \\
Now writing $\varphi_f(1)-\varphi_f(0)=\int_0^1\varphi_f'(t)dt$ we obtain the desired result.
\end{proof}

\begin{proposition}\label{prop:eq_norm}
For $0<p\leq 2$ and $x\in L_2(\M)$ we have
\begin{equation}\label{eq:eq_norm}
\Big(\frac{p}{2}\Big)^{1/2}N^c_p(x)\leq \|x\|_{\h^c_p}\leq N^c_p(x).
\end{equation}
A similar statement holds for $\h^r_p(\M)$ and $N^r_p$.
\end{proposition}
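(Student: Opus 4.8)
The plan is to establish the two inequalities in \eqref{eq:eq_norm} separately. The case $p=2$ is trivial: then $w_n^{1-2/p}=1$, $\{1\}\in W$, and all three quantities equal $\|S_c(x)\|_2$; so I take $0<p<2$, put $s=\tfrac{p}{2-p}>0$ and $\alpha=1-\tfrac p2\in(0,1)$, and may assume $\|x\|_{\h_p^c}<\infty$. The key preliminary remark is that $v_n:=w_n^{1/s}$ sets up a bijection between $W$ and the family of nondecreasing adapted sequences $\{v_n\}$ with $v_n\in L_s^+(\M_n)$ invertible with bounded inverse and $\|v_n\|_s\le 1$, under which $w_n^{1-2/p}=v_n^{-1}$. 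I also write $b_n=s_{c,n}(x)^2=\sum_{k=1}^n\E_{k-1}|dx_k|^2\in\M_{n-1}^+$ and $b=s_c(x)^2=\sup_n b_n\in L_{p/2}(\M)$, so that $\tau(b^{p/2})=\|x\|_{\h_p^c}^p$.

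For the right-hand inequality $\|x\|_{\h_p^c}\le N_p^c(x)$ I would fix $\{w_n\}\in W$ (assuming the trace below is finite) and set $v=\sup_n v_n\in L_s^+(\M)$; then $\|v\|_s\le1$ and $v\ge v_1\ge\delta 1$ for some $\delta>0$, while $v\ge v_n$, so $v^{-1}$ is bounded, $v^{-1}\le v_n^{-1}$, and hence $\E_n(v^{-1})\le v_n^{-1}$ for all $n$. Using positivity of each $|dx_{n+1}|^2$ and that $\E_n$ is a $\tau$-preserving $\M_n$-bimodule projection,
$$\tau\Big(\sum_n w_n^{1-2/p}|dx_{n+1}|^2\Big)=\tau\Big(\sum_n v_n^{-1}|dx_{n+1}|^2\Big)\ge\tau\Big(\sum_n\E_n(v^{-1})|dx_{n+1}|^2\Big)=\tau\big(v^{-1}s_c(x)^2\big).$$
Then, writing $s_c(x)^2=v^{1/2}cv^{1/2}$ with $c=v^{-1/2}s_c(x)^2v^{-1/2}\ge0$ and $\tau(c)=\tau(v^{-1}s_c(x)^2)$, the noncommutative Hölder inequality with exponents $(2s,1,2s)$ (so $\tfrac1{2s}+1+\tfrac1{2s}=\tfrac2p$) gives $\|x\|_{\h_p^c}^2=\|s_c(x)^2\|_{p/2}\le\|v^{1/2}\|_{2s}^2\,\|c\|_1=\|v\|_s\,\tau(c)\le\tau(v^{-1}s_c(x)^2)$. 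Taking the infimum over $W$ finishes this direction.

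For the left-hand inequality I would exhibit a near-optimal weight. Given $\epsilon>0$, set $w_n=c_\epsilon(b_{n+1}+\epsilon)^{p/2}$ with $c_\epsilon=(\tau(b^{p/2})+\epsilon^{p/2})^{-1}$. Since $b_{n+1}\in\M_n$ and $\{b_{n+1}+\epsilon\}$ is nondecreasing with $b_{n+1}+\epsilon\ge\epsilon1$, operator monotonicity of $t\mapsto t^{1-p/2}$ makes $\{w_n^{2/p-1}\}$ nondecreasing and subadditivity of $t\mapsto t^{p/2}$ gives $\|w_n\|_1\le c_\epsilon(\tau(b^{p/2})+\epsilon^{p/2})=1$; hence $\{w_n\}\in W$. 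As $w_n^{1-2/p}=c_\epsilon^{1-2/p}(b_{n+1}+\epsilon)^{-\alpha}$ with $(b_{n+1}+\epsilon)^{-\alpha}\in\M_n$ and $\E_n|dx_{n+1}|^2=(b_{n+1}+\epsilon)-(b_n+\epsilon)$, the trace in question equals $c_\epsilon^{1-2/p}\sum_{n\ge0}\tau\big((b_{n+1}+\epsilon)^{-\alpha}[(b_{n+1}+\epsilon)-(b_n+\epsilon)]\big)$. Applying Lemma \ref{le:preliminary} to a $C^1(\R^+)$ function coinciding with $t\mapsto t^{p/2}$ on $[\epsilon,\infty)$, with $x=b_n+\epsilon$, $y=(b_{n+1}+\epsilon)-(b_n+\epsilon)$, and using $x+ty\le b_{n+1}+\epsilon$ together with the operator monotone decrease of $t\mapsto t^{-\alpha}$, yields the termwise bound
$$\tau\big((b_{n+1}+\epsilon)^{p/2}-(b_n+\epsilon)^{p/2}\big)\ge\tfrac p2\,\tau\big((b_{n+1}+\epsilon)^{-\alpha}[(b_{n+1}+\epsilon)-(b_n+\epsilon)]\big).$$
Summing telescopes the left side to $\tau((b+\epsilon)^{p/2})-\epsilon^{p/2}\le\tau(b^{p/2})$, so with $c_\epsilon^{1-2/p}=(\tau(b^{p/2})+\epsilon^{p/2})^{2/p-1}$ one gets $N_p^c(x)^2\le\tfrac2p(\tau(b^{p/2})+\epsilon^{p/2})^{2/p}$; letting $\epsilon\to0$ gives $N_p^c(x)^2\le\tfrac2p\|x\|_{\h_p^c}^2$, i.e. $(\tfrac p2)^{1/2}N_p^c(x)\le\|x\|_{\h_p^c}$. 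The row statement follows verbatim with $dx_k^*$ in place of $dx_k$.

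The points demanding the most care are: in the first inequality, handling $v=\sup_n v_n$ (which may be unbounded, so one works with the bounded operator $v^{-1}$ and the order-reversal of the inverse) and invoking Hölder in the quasi-Banach space $L_{p/2}(\M)$; and in the second, checking that the explicit weight $w_n\propto(b_{n+1}+\epsilon)^{p/2}$ genuinely lies in $W$ and carrying out the $\epsilon$-regularization, which is forced both because $b_{n+1}$ need not be invertible and because $t\mapsto t^{p/2}$ is not $C^1$ at $0$, so Lemma \ref{le:preliminary} applies only after the shift. A routine reduction to finite martingales (or the direct observation that $b_{n+1}^{p/2}\in L_1(\M_n)$ whenever $\|x\|_{\h_p^c}<\infty$) disposes of the residual integrability questions.
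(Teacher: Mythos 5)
Your argument is correct and follows essentially the same route as the paper's proof: the estimate $\big(\tfrac p2\big)^{1/2}N_p^c(x)\le\|x\|_{\h_p^c}$ is obtained from the weight $w_n\approx s_{c,n+1}(x)^p$ (your $\epsilon$-shift and normalizing constant $c_\epsilon$ replace the paper's reduction, by approximation, to $\|x\|_{\h_p^c}<1$ with $s_{c,n}(x)$ invertible) combined with Lemma \ref{le:preliminary} and the operator monotone decrease of $t\mapsto t^{p/2-1}$, while the reverse estimate uses the limiting weight $w^{2/p-1}=\sup_n w_n^{2/p-1}$ and the H\"older inequality, exactly as in the paper. The only differences are cosmetic (working with $v^{-1}$ and $\E_n(v^{-1})$ instead of the decreasing sequence $w_n^{1-2/p}$, and a three-factor H\"older in place of the paper's two-factor splitting).
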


\begin{proof}
Note that
$$\begin{array}{ccl}
N^c_p(x)&=&\displaystyle\inf_W \Big[\tau\Big(\displaystyle\sum_{n\geq0}w_n^{1-2/p}\E_n|dx_{n+1}|^2\Big)\Big]^{1/2}\\
&=&\displaystyle\inf_W \Big[\tau\Big(\displaystyle\sum_{n\geq0}w_n^{1-2/p}(s_{c,n+1}(x)^2-s_{c,n}(x)^2)\Big)\Big]^{1/2}.
\end{array}$$
Let $x\in L_2(\M)$ with $\|x\|_{\h^c_p}<1$. 
By approximation we can assume that $x\in L_\infty(\M)$ 
and $s_{c,n}(x)$ is invertible with bounded inverse for every $n\geq 1$. 
Then $\{s_{c,n+1}(x)^p\}\in W$; so
$$N^c_p(x)\leq \Big[\tau\Big(\sum_{n\geq0}s_{c,n+1}(x)^{p-2}(s_{c,n+1}(x)^2-s_{c,n}(x)^2)\Big)\Big]^{1/2}.$$
Applying Lemma \ref{le:preliminary} with $f(t)=t^{p/2}, x+y=s_{c,n+1}(x)^2$ and $x=s_{c,n}(x)^2$ we obtain
$$\begin{array}{l}
\tau(s_{c,n+1}(x)^p-s_{c,n}(x)^p)=\\
\tau\Big(\displaystyle\int_0^1\displaystyle\frac{p}{2}\big[s_{c,n}(x)^2+t(s_{c,n+1}(x)^2-s_{c,n}(x)^2)\big]^{\frac{p}{2}-1}
\big[s_{c,n+1}(x)^2-s_{c,n}(x)^2\big]dt\Big)\\
\geq
\displaystyle\frac{p}{2}\tau (s_{c,n+1}(x)^{p-2}(s_{c,n+1}(x)^2-s_{c,n}(x)^2)),
\end{array}$$
where we have used the fact that the operator function $a\mapsto a^{\frac{p}{2}-1}$ is nonincreasing for $-1<\frac{p}{2}-1\leq 0$. 
Taking the sum over $n$ leads to
$$N^c_p(x)^2\leq \frac{2}{p} \tau(s_c(x)^p)=\frac{2}{p}.$$

We turn to the other estimate. 
Given $\{w_n\}\in W$ put 
$$w^{2/p-1}=\displaystyle\lim_{n\rightarrow +\infty} w_n^{2/p-1}=\displaystyle\sup_n w_n^{2/p-1}.$$
It follows that $\{w_n^{1-2/p}\}$ decreases to $w^{1-2/p}$ and 
$$\begin{array}{ccl}
\tau\Big(\displaystyle\sum_{n\geq0}w_n^{1-2/p}|dx_{n+1}|^2\Big)
&\geq& \tau\Big(w^{1-2/p}\displaystyle\sum_{n\geq0}\E_n|dx_{n+1}|^2\Big)\\
&=&\tau\Big(w^{1-2/p}s_c(x)^2\Big).
\end{array}$$
Since $\frac{1}{p}=\frac{1}{2}+\frac{2-p}{2p}$ 
the H\"{o}lder inequality gives
$$\begin{array}{ccl}
\|s_c(x)\|_p&=&\|w^{1/p-1/2}w^{1/2-1/p}s_c(x)\|_p\\
&\leq&\|w^{1/p-1/2}\|_{2p/(2-p)}\|w^{1/2-1/p}s_c(x)\|_2\\
&=&\tau(w)^{1/p-1/2}\tau(w^{1-2/p}s_c(x)^2)^{1/2}.
\end{array}$$
Now $\tau(w)\leq 1$; so we have
$$\|s_c(x)\|_p\leq \Big[\tau\Big(\sum_{n\geq 0}w_n^{1-2/p}|dx_{n+1}|^2\Big)\Big]^{1/2}$$
for all $\{w_n\} \in W$.
\end{proof}

Thus the quasinorm $N^c_p$ is equivalent to $\|\cdot\|_{\h_p^c}$ on $L_2(\M)$. 
So $\h_p^c(\M)$ can also be defined as the completion of all finite $L_2$-martingales with respect to $N_p^c$ for $0<p\leq 2$. 
This new characterization of $\h_p^c(\M)$ yields the following description of its dual space.

\begin{theorem}\label{th:duality_hp}
Let $0<p\leq 2$ and $q$ be determined by $\frac{1}{q}=1-\frac{1}{p}$. 
Then the dual space of $\h_p^c(\M)$ coincide with the $L_2$-martingales $x$ for which 
$M^c_q(x)=\displaystyle\sup_W \Big[\tau\Big(\displaystyle\sum_{n\geq 0}w_n^{1-2/q}|dx_{n+1}|^2\Big)\Big]^{1/2}<\infty$. 
More precisely,
\begin{enumerate}[\rm(i)]
\item Every $L_2$-martingale $x$ such that $M^c_q(x)<\infty$ defines a continuous linear functional on $\h_p^c(\M)$ by
\begin{equation*}
\phi_x(y)=\tau(yx^*) \mbox{ for } y\in L_2(\M).
\end{equation*}
\item Conversely, any continuous linear functional $\phi$ on $\h_p^c(\M)$ is given as above 
by some $x$ such that $M^c_q(x)<\infty$.
\end{enumerate}
Similarly, the dual space of $\h_p^r(\M)$ coincide with the $L_2$-martingales $x$ for which 
$M^r_q(x)=M^c_q(x^*)<\infty$. 
\end{theorem}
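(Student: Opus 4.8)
The plan is to exploit the new characterization of $\h_p^c(\M)$ as the completion of finite $L_2$-martingales under the equivalent quasinorm $N_p^c$ (Proposition \ref{prop:eq_norm}), and to identify the dual via a two-sided estimate that mirrors the structure of $N_p^c$ and $M_q^c$. The index $q$ with $\frac1q=1-\frac1p$ is the natural conjugate, since $1-\frac2p$ and $1-\frac2q$ are negatives of each other (indeed $(1-\frac2p)+(1-\frac2q)=2-2(\frac1p+\frac1q)=0$), so the weight factors $w_n^{1-2/p}$ appearing in $N_p^c$ are inverted in $M_q^c$. Note that for $p\leq 2$ we have $q\leq 0$ or $q=\infty$ or $q$ large; one should be slightly careful with the degenerate case $p=2$ (where $q=\infty$ and $M_2^c(x)=\|s_c(x)\|_\infty$-type quantity recovering the known $(\h_2^c)^*=\h_2^c$) and treat $0<p<2$ as the main case. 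Throughout, the bracket is $\phi_x(y)=\tau(yx^*)$, defined for $x,y\in L_2(\M)$, which makes sense because finite $L_2$-martingales are dense in $\h_p^c(\M)$ for this quasinorm.

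First I would prove part (i): if $M_q^c(x)<\infty$ then $\phi_x$ is bounded on $\h_p^c(\M)$. Given a finite martingale $y$, I want to bound $|\tau(yx^*)|$ by $C\,M_q^c(x)\,N_p^c(y)$. Writing $\tau(yx^*)=\sum_{n\geq 0}\tau(dy_{n+1}\,dx_{n+1}^*)=\sum_{n\geq0}\tau(\E_n(dy_{n+1}dx_{n+1}^*))$ and inserting, for any fixed $\{w_n\}\in W$, the factorization $dy_{n+1}dx_{n+1}^* = (w_n^{(1-2/p)/2}dy_{n+1})(w_n^{-(1-2/p)/2}dx_{n+1}^*)$ — legitimate since each $w_n$ is invertible with bounded inverse — one applies the Cauchy–Schwarz inequality in $L_2$ (in the form $|\tau(ab)|\le\|a\|_2\|b\|_2$, together with the operator-valued Cauchy–Schwarz for the conditional expectations $\E_n$) to get
\[
|\tau(yx^*)|\le \Big[\tau\Big(\sum_{n\geq0}w_n^{1-2/p}|dy_{n+1}|^2\Big)\Big]^{1/2}\Big[\tau\Big(\sum_{n\geq0}w_n^{-(1-2/p)}|dx_{n+1}^*|^2\Big)\Big]^{1/2}.
\]
Since $-(1-2/p)=1-2/q$, the second factor is bounded by $M_q^c(x)$, and taking the infimum over $\{w_n\}\in W$ in the first factor yields $|\tau(yx^*)|\le N_p^c(y)\,M_q^c(x)\le \sqrt{2/p}\,\|y\|_{\h_p^c}M_q^c(x)$ by Proposition \ref{prop:eq_norm}. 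Hence $\phi_x$ extends continuously to $\h_p^c(\M)$ with $\|\phi_x\|\le\sqrt{2/p}\,M_q^c(x)$.

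Next, part (ii): given $\phi\in\h_p^c(\M)^*$, since $L_2(\M)$ (more precisely finite $L_2$-martingales) embeds continuously and densely in $\h_p^c(\M)$, Riesz representation on $L_2(\M)$ produces $x\in L_2(\M)$ with $\phi(y)=\tau(yx^*)$; one may and does take $x$ to be a martingale (replace $x$ by its limit of $\E_n(x)$, or note the bracket only sees the martingale part). It remains to show $M_q^c(x)\le C\|\phi\|$, i.e. that for every $\{w_n\}\in W$,
\[
\tau\Big(\sum_{n\geq0}w_n^{1-2/q}|dx_{n+1}^*|^2\Big)\le C^2\|\phi\|^2.
\]
The idea — this is the crux — is, for a fixed $\{w_n\}\in W$ and a fixed truncation level $N$, to choose the test martingale $y$ whose differences are $dy_{n+1}=w_n^{1-2/q}dx_{n+1}$ for $n<N$ and $0$ afterwards; then $\tau(yx^*)=\sum_{n<N}\tau(w_n^{1-2/q}|dx_{n+1}^*|^2)$ equals exactly the (truncated) quantity we want to bound, while $N_p^c(y)$ — evaluated against this same weight sequence $\{w_n\}$ — is controlled: $\tau\big(\sum_n w_n^{1-2/p}|dy_{n+1}|^2\big)=\tau\big(\sum_{n<N}w_n^{1-2/p}w_n^{2(1-2/q)}|dx_{n+1}^*|^2\big)$, and since $1-2/p=-(1-2/q)$ this collapses to $\tau\big(\sum_{n<N}w_n^{1-2/q}|dx_{n+1}^*|^2\big)$ again. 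Thus $|\phi(y)|=\tau(yx^*)=N_p^c(y)^2$ bounds (up to the constant in Proposition \ref{prop:eq_norm}) give $\tau(yx^*)\le\|\phi\|\,\|y\|_{\h_p^c}\le\|\phi\|\,N_p^c(y)\le\|\phi\|\,(\tau(yx^*))^{1/2}$, hence $\tau(yx^*)\le\|\phi\|^2$; letting $N\to\infty$ and then taking the supremum over $\{w_n\}\in W$ gives $M_q^c(x)\le\|\phi\|$.

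The main obstacle, and the step requiring the most care, is the choice of test martingale in part (ii): one must check that $y=(y_n)$ with $dy_{n+1}=w_n^{1-2/q}dx_{n+1}$ is indeed an $L_2$-martingale difference sequence (boundedness of $w_n$ and its inverse, together with $x\in L_2$, handles $L_2$-integrability; adaptedness of $w_n\in\M_n$ makes $w_n^{1-2/q}dx_{n+1}$ a martingale difference since $\E_n(w_n^{1-2/q}dx_{n+1})=w_n^{1-2/q}\E_n(dx_{n+1})=0$), and that the manipulations with fractional powers $w_n^{1-2/q}$ — which is a \emph{positive} power since $1-2/q=2/p-1\geq 0$ for $p\leq 2$ — are valid. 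A secondary point is justifying the density and the approximation by finite $L_2$-martingales so that the Riesz step and the summation interchanges are legitimate; this is routine but should be stated. Finally, the row case follows by passing to adjoints exactly as in the proof of Theorem \ref{th:duality_h1at}, using $M_q^r(x)=M_q^c(x^*)$ and $N_p^r(y)=N_p^c(y^*)$.
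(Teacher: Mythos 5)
Your overall route is the paper's: for (i) a weighted Cauchy--Schwarz estimate against a fixed $\{w_n\}\in W$ followed by an infimum, and for (ii) Riesz representation on $L_2(\M)$ plus testing $\phi$ against a martingale $y$ built from $x$ and the weights, closed by the self-improving inequality $A\le\|\phi\|A^{1/2}$. However, the noncommutative bookkeeping is wrong in a way that breaks the displayed steps. In (i), $(w_n^{(1-2/p)/2}dy_{n+1})(w_n^{-(1-2/p)/2}dx_{n+1}^*)$ is \emph{not} equal to $dy_{n+1}dx_{n+1}^*$: the inserted weights sit on opposite sides of $dy_{n+1}$ and do not cancel. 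Moreover your displayed bound pairs the column quantity $\tau(w_n^{1-2/p}|dy_{n+1}|^2)$ for $y$ with the row quantity $\tau(w_n^{1-2/q}|dx_{n+1}^*|^2)$ for $x$; the latter is controlled by $M_q^r(x)=M_q^c(x^*)$, not by $M_q^c(x)$, and no single insertion of $w_n^{a}w_n^{-a}$ produces this mixed pairing. In (ii) the same left/right confusion is fatal: with $dy_{n+1}=w_n^{1-2/q}dx_{n+1}$ one has $|dy_{n+1}|^2=dx_{n+1}^*\,w_n^{2(1-2/q)}\,dx_{n+1}$, and the claimed collapse $\tau(w_n^{1-2/p}|dy_{n+1}|^2)=\tau(w_n^{1-2/q}|dx_{n+1}^*|^2)$ is false, since the extra weight $w_n^{1-2/p}$ cannot be commuted past $dx_{n+1}^*$ (traciality lets you cycle, not cancel, here). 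In addition, the quantity you would end up bounding, $\tau\big(\sum_n w_n^{1-2/q}|dx_{n+1}^*|^2\big)$, is $M_q^r(x)^2$, whereas the theorem requires control of $M_q^c(x)$, which involves $|dx_{n+1}|^2$.

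The repair is exactly the paper's convention: always multiply the weights on the right of the differences. In (i) write $\tau(dy_{n+1}dx_{n+1}^*)=\tau\big((dy_{n+1}w_n^{1/2-1/p})(dx_{n+1}w_n^{1/2-1/q})^*\big)$, a genuine identity since $w_n^{1/2-1/p}w_n^{1/2-1/q}=1$; Cauchy--Schwarz then yields the two column factors $\tau(w_n^{1-2/p}|dy_{n+1}|^2)$ and $\tau(w_n^{1-2/q}|dx_{n+1}|^2)$, the second bounded by $M_q^c(x)^2$. In (ii) take $dy_{n+1}=dx_{n+1}w_n^{1-2/q}$; then $|dy_{n+1}|^2=w_n^{1-2/q}|dx_{n+1}|^2w_n^{1-2/q}$, so by traciality and $w_n^{1-2/p}w_n^{1-2/q}=1$ both $\tau(yx^*)$ and $\tau\big(\sum_n w_n^{1-2/p}|dy_{n+1}|^2\big)$ equal $\tau\big(\sum_n w_n^{1-2/q}|dx_{n+1}|^2\big)$, and your self-improving argument then goes through verbatim, giving $M_q^c(x)\le\|\phi\|$. (A peripheral slip: the degenerate value is $p=1$, which gives $q=\infty$; at $p=2$ one has $q=2$.) With these corrections your proof coincides with the paper's.
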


\begin{proof}
Let $x$ be such that $M^c_q(x)<\infty$. 
Then $x$ defines a continuous linear functional on $\h_p^c(\M)$ 
by $\phi_x(y)=\tau(yx^*)$ for $y\in L_2(\M)$. 
To see this fix $\{w_n\}\in W$. The Cauchy-Schwarz inequality gives
$$\begin{array}{ccl}
\tau(yx^*)
&=&\displaystyle\sum_{n\geq 0} \tau\Big((dy_{n+1}w_n^{1/2-1/p})(dx_{n+1}w_n^{1/2-1/q})^*\Big)\\
&\leq&\Big(\displaystyle\sum_{n\geq 0}\tau(w_n^{1-2/p}|dy_{n+1}|^2)\Big)^{1/2}
\Big(\displaystyle\sum_{n\geq 0}\tau(w_n^{1-2/q}|dx_{n+1}|^2)\Big)^{1/2}\\
&\leq&\Big(\displaystyle\sum_{n\geq 0}\tau(w_n^{1-2/p}|dy_{n+1}|^2)\Big)^{1/2} M^c_q(x). 
\end{array}$$
Taking the infimum over $W$ we obtain $\tau(yx^*)\leq N^c_p(y) M^c_q(x)$.

Conversely, let $\phi$ be a continuous linear functional on $\h_p^c(\M)$ of norm $\leq 1$. 
As $L_2(\M)\subset\h_p^c(\M)$, 
$\phi$ induces a continuous linear functional on $L_2(\M)$. 
Thus there exists $x\in L_2(\M)$ such that $\phi(y)=\tau(yx^*)$ for $y\in L_2(\M)$. 
By the density of $L_2(\M)$ in $\h_p^c(\M)$ we have 
$$\|\phi\|_{(\h_p^c)^*}=\sup_{y\in L_2(\M), \|y\|_{\h^c_p}\leq 1}|\tau(yx^*)| \leq 1.$$
Thus by Proposition \ref{prop:eq_norm} we obtain
\begin{equation}\label{eq:norm_phi}
\sup_{y\in L_2(\M), N_p^c(y)\leq 1}|\tau(yx^*)| \leq 1.
\end{equation}
We want to show that $M^c_q(x)<\infty$. 
Fix $\{w_n\}\in W$. 
Let $y$ be the martingale defined by $dy_{n+1}=dx_{n+1}w_n^{1-2/q}, \forall n\in \N$. 
By $\eqref{eq:norm_phi}$ we have
$$\begin{array}{ccl}
\tau(yx^*)&=&\tau\Big(\displaystyle\sum_{n\geq 0}w_n^{1-2/q}|dx_{n+1}|^2\Big)\leq N^c_p(y)\\
&\leq& \tau\Big(\displaystyle\sum_{n\geq 0}w_n^{1-2/q}|dx_{n+1}|^2\Big)^{1/2}.
\end{array}$$
Thus
$$\tau\Big(\displaystyle\sum_{n\geq 0}w_n^{1-2/q}|dx_{n+1}|^2\Big)\leq 1, \quad \forall \{w_n\}\in W.$$
Taking the supremum over $W$ we obtain $M^c_q(x)\leq 1$. 

Passing to adjoints yields the description of the continuous linear functionals on $\h_p^r(\M)$.
\end{proof}

Remark that for $-\infty<1/q\leq1/2$, $M^c_q$ and $M^r_q$ define two norms. 
Let $X^c_q$ (resp. $X^r_q$) be the Banach space consisting of 
the $L_2$-martingales $x$ for which $M^c_q(x)$ (resp. $M^r_q(x)$) is finite. 
Theorem \ref{th:duality_hp} shows that $(\h^c_p(\M))^*=X^c_q$ and $(\h^r_p(\M))^*=X^r_q$ 
for $0<p\leq 2$, $\frac{1}{q}=1-\frac{1}{p}$. 

For $-\infty<1/q\leq1/2$, note that $M^c_q(x)$ can be rewritten in the following form. 
Given $\{w_n\}_{n\geq 0}\in W$ we put 
$$g_n=(w_n^{2/s}-w_{n-1}^{2/s})^{1/2}, \quad \forall n \geq 1$$
where $\frac{1}{s}=\frac{1}{2}-\frac{1}{q}$. 
It is clear that
$$\{g_n\}_{n\geq 1}\in G=\Big\{\{h_n\}_{n\geq 1}; h_n \in L_s(\M_n), \tau\Big(\Big(\sum_{n\geq 1}|h_n|^2\Big)^{s/2}\Big) \leq 1\Big\}.$$
Then 
$$M^c_q(x)=\sup_G \Big[ \tau\Big(\sum_{n\geq 1}|g_n|^2\E_n|x-x_n|^2\Big)\Big]^{1/2}.$$
It is now easy to see that the dual form of Junge's noncommutative Doob maximal inequality (\cite{J}) implies that 
for $q\geq 2, X^c_q=L^c_q\mo(\M)$ with equivalent norms, 
where $L^c_q\mo(\M)$ is defined in \cite{P}.

Similarly, we have $X^r_q=L^r_q\mo(\M)$ with equivalent norms.

Thus for $1\leq p \leq 2$, Theorem \ref{th:duality_hp} gives another proof of the duality  obtained in \cite{P} 
between $\h_p(\M)$ and $L_q\mo(\M)$ for $\frac{1}{p}+\frac{1}{q}=1$. 
Note that this new proof is much simpler and yields a better constant for the upper estimate, 
that is $\sqrt{p/2}$ instead of $\sqrt{2}$.

For $0<p<1$, Theorem \ref{th:duality_hp} leads to a first description 
of the dual space of $\h_p(\M)$. 
However, this description is not satisfactory. 
Following the classical case, we would like to describe this dual space as the Lipschitz space 
$\Lambda^c_\alpha(\M)$ defined in the previous section as the dual space of $\h_p^{c,\at}(\M)$. 
Thus the description of the dual space of $\h_p(\M)$ for $0<p<1$ is closely related to 
the atomic decomposition of $\h_p(\M)$.

\section{Interpolation of $\h_p$ spaces}

It is a rather easy matter to identify interpolation spaces between commutative or noncommutative 
$L_p$-spaces by real or complex method.
However, we need more efforts to establish interpolation results between Hardy spaces of martingales 
(see \cite{JJ}, and also \cite{X3}). 
Musat (\cite{M}) extended Janson and Jones' interpolation theorem for Hardy spaces of martingales 
to the noncommutative setting. 
She proved in particular that for $1\leq q<q_\theta<\infty$
\begin{equation}\label{eq:interpolation_musat}
(\BMO^c(\M),\H^c_q(\M))_{\frac{q}{q_\theta}}=\H^c_{q_\theta}(\M).
\end{equation}
See also \cite{JM} for a different proof with better constants.
This section is devoted to showing the analogue of \eqref{eq:interpolation_musat} in the conditioned case. 
Our approach is simpler and more elementary than Musat's and also valid for her situation.

We refer to \cite{BL} for details on interpolation. 
Recall that the noncommutative $L_p$-spaces associated with a semifinite von Neumann algebra 
form interpolation scales with respect to the complex method and the real method.
More precisely, for $0<\theta<1, 1\leq p_0< p_1\leq\infty$ and $1\leq q_0,q_1,q\leq \infty$ we have
\begin{equation}\label{eq:cinterLp}
L_p(\M)=(L_{p_0}(\M),L_{p_1}(\M))_\theta \quad \mbox{ (with equal norms)}
\end{equation}
and
\begin{equation}\label{eq:rinterLpq}
L_{p,q}(\M)=(L_{p_0,q_0}(\M),L_{p_1,q_1}(\M))_{\theta,q} \quad \mbox{ (with equivalent norms)}
\end{equation}
where $\frac{1}{p}=\frac{1-\theta}{p_0}+\frac{\theta}{p_1}$, 
and where $L_{p,q}(\M)$ denotes the noncommutative Lorentz space on $(\M,\tau)$.

We can now state the main result of this section which deals with complex interpolation between 
the column spaces $\bmo^c(\M)$ and $\h^c_1(\M)$.

\begin{theorem}\label{th:interp_bmoc_h1c}
Let $1< p<\infty$.
Then, the following holds with equivalent norms
\begin{equation}\label{eq:interpolation}
(\bmo^c(\M), \h^c_1(\M))_{\frac{1}{p}}=\h^c_{p}(\M).
\end{equation}
\end{theorem}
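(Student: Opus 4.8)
The plan is to deduce the complex interpolation identity \eqref{eq:interpolation} from the new characterization of $\h_p^c(\M)$ via the functional $N_p^c$ established in Proposition \ref{prop:eq_norm}, together with the duality descriptions from Theorems \ref{th:duality_h1at} and \ref{th:duality_hp}. The key observation is that the quasinorms $N_p^c$ have a "weighted $L_2$" structure that interacts well with interpolation: an $L_2$-martingale $x$ has $\|x\|_{\h_p^c}\approx N_p^c(x)=\inf_W[\tau(\sum_n w_n^{1-2/p}|dx_{n+1}|^2)]^{1/2}$, and the exponent $1-2/p$ depends affinely on $1/p$. So the first step is to set up, for fixed $w=\{w_n\}\in W$, the one-parameter family and to recognize $\h_p^c$ as living inside a scale of weighted Hilbertian column spaces to which the complex method applies in a controlled way.

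The cleanest route, and the one I would pursue first, is to obtain both inclusions in \eqref{eq:interpolation} separately. For the easy inclusion $(\bmo^c(\M),\h_1^c(\M))_{1/p}\subset\h_p^c(\M)$, I would use that the couple embeds into $(X_\infty^c, \h_1^c)$ — here $\bmo^c=\Lambda^c$ and one checks the relevant endpoint estimates — and bound the $\h_p^c$-quasinorm of an element of the interpolation space by testing against the dual: since $(\h_{p'}^c)^*$-type functionals are controlled by $M_q^c$, and $M_q^c$, $M_\infty^c$ on the dual side again have the weighted form, a bilinear/three-lines argument on $\tau(yx^*)$ gives the bound. Concretely, given $x$ in the interpolation space with a suitable analytic extension $f(z)$, and $y\in L_2(\M)$, one estimates $|\tau(yx^*)|$ by splitting $y$ through the weights associated to an optimal $\{w_n\}\in W$ and applying Hölder in the vertical ($L_2$) direction and the three-lines lemma in $z$; this yields $|\tau(yx^*)|\le C N_p^c(y)$, hence $x\in\h_p^c(\M)$ by Theorem \ref{th:duality_hp} and Proposition \ref{prop:eq_norm}.

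For the reverse inclusion $\h_p^c(\M)\subset(\bmo^c(\M),\h_1^c(\M))_{1/p}$, I would take $x\in\h_p^c(\M)$ with $N_p^c(x)<1$, choose a near-optimal weight sequence $\{w_n\}\in W$ witnessing the infimum, and build an explicit analytic family $f(z)$ with $f(1/p)=x$ by inserting powers of the weights: roughly $df(z)_{n+1}=w_n^{\alpha(z)}\,dx_{n+1}\,w_n^{\beta(z)}$ (or a one-sided version) with $\alpha,\beta$ affine in $z$ chosen so that at $z=1/p$ one recovers $x$, on the line $\mathrm{Re}\,z=0$ one lands in $\bmo^c=\Lambda^c$ with controlled norm, and on $\mathrm{Re}\,z=1$ one lands in $\h_1^c$ with controlled norm. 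Verifying the two endpoint bounds is where the weighted-square-function computations of Section 3 (the identity rewriting $M_q^c$ in terms of $\E_n|x-x_n|^2$, and the defining inequalities of $W$ and $G$) do the work.

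The main obstacle I anticipate is the noncommutativity in the construction of the analytic family: unlike the commutative case, one cannot simply multiply $dx_{n+1}$ by a scalar weight, and inserting operator weights $w_n^{z}$ on one or both sides must be done so that (a) the resulting $f(z)$ is genuinely a martingale with $df(z)_{n+1}$ adapted — which forces the weights to lie in $\M_n$, consistent with $w_n\in L_1^+(\M_n)$ — and (b) the endpoint norms $\|f(it)\|_{\bmo^c}$ and $\|f(1+it)\|_{\h_1^c}$ are uniformly bounded, which requires care because $|df(z)_{n+1}|^2$ now involves $w_n$ on both sides and interacts with the conditional expectations $\E_{n}$ non-trivially. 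I expect this to be handled by keeping the weights on a single side together with the fact that $w_n$ commutes with $\E_n$ in the relevant expressions, exactly as exploited in the proof of Proposition \ref{pr:atom_properties} and in the rewriting of $M_q^c$; once the family is correctly set up, the three-lines lemma and \eqref{eq:cinterLp} for the ambient weighted $L_2$-spaces close the argument. Finally, the row case follows by passing to adjoints, as elsewhere in the paper.
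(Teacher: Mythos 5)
You have correctly identified the engine of the paper's argument---the equivalent quasinorm $N_p^c$ of Proposition \ref{prop:eq_norm} and the analytic family obtained by inserting powers of a near-optimal weight sequence $\{w_n\}\in W$ on the right of the martingale differences---but you aim it at the wrong couple, and this leaves genuine gaps. For the inclusion $\h_p^c(\M)\subset(\bmo^c(\M),\h_1^c(\M))_{1/p}$ you need boundary values of your family on $\mathrm{Re}\,z=0$ that are \emph{uniformly bounded in} $\bmo^c(\M)=\Lambda^c(\M)$. With the only admissible choice $df(z)_{n+1}=dx_{n+1}w_n^{\,z-1/p}$ (forced by $f(1/p)=x$ and the desired $\h_1^c$-bound at $\mathrm{Re}\,z=1$), this would require a uniform $L_\infty$-control of $\sum_{m\ge n}\E_n\bigl(w_m^{-1/p-it}\,|dx_{m+1}|^2\,w_m^{-1/p+it}\bigr)$, and nothing in the single trace inequality $\tau\bigl(\sum_n w_n^{1-2/p}|dx_{n+1}|^2\bigr)\le C$ witnessing $N_p^c(x)$ can produce an endpoint estimate of $L_\infty$-type; this step is not carried out in your sketch and there is no reason it should hold for a generic near-optimal $\{w_n\}$. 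Moreover $N_p^c$ is only equivalent to $\|\cdot\|_{\h_p^c}$ for $0<p\le 2$, so even where the construction makes sense you are confined to $1<p\le 2$ and you give no mechanism for $2<p<\infty$. The converse inclusion is not easy either: the continuous inclusions $\bmo^c(\M)\subset\h_s^c(\M)$ of Remark \ref{rk:density} only yield $(\bmo^c,\h_1^c)_{1/p}\subset\h_r^c$ for every $r<p$, and your bilinear/three-lines argument through $M_q^c$ is only indicated, not proved.

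The paper avoids both difficulties by never attacking the endpoint $\bmo^c$ with an explicit analytic family. Your construction is exactly Step 1 of Lemma \ref{le:interp_hpc}, but applied to the couple $(\h_1^c(\M),\h_p^c(\M))$ with $1<q<p\le 2$, where \emph{both} boundary estimates reduce to weighted trace computations via $N_1^c$ and $N_p^c$ (the imaginary powers $w_n^{it}$ disappear under the trace). This identity is then extended to all $1<p<\infty$ by Wolff's interpolation theorem, using that $\h_p^c(\M)$ is complemented in $L_p(\M\overline{\otimes}\mathrm{B}(\ell_2(\N^2)))$ so that $(\h_v^c,\h_s^c)_\eta=\h_q^c$ for $1<v,s<\infty$; dualizing (Theorem 4.5.1 of \cite{BL}, together with $(\h_1^c)^*=\bmo^c$ and $(\h_p^c)^*=\h_{p'}^c$) gives Lemma \ref{le:interp_bmoc_hqc}, i.e. $(\bmo^c,\h_q^c)_{q/p}=\h_p^c$ for $1<q<p<\infty$, and a second application of Wolff's theorem pushes $q$ down to $1$, which is the statement of the theorem. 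As written, your proposal proves at most the $1<q<p\le 2$ case of Lemma \ref{le:interp_hpc}; to reach the theorem you must either supply the missing $\bmo^c$ endpoint estimate (which the paper deliberately circumvents) or interpose the duality and Wolff steps.
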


\begin{rk}\label{rk:density}
All spaces considered here are compatible in the sense that 
they can be embedded in the $*$-algebra of measurable operators with respect to 
$(\M\overline{\otimes} \mathrm{B}(\ell_2(\N^2)),\tau\otimes \mathrm{Tr})$. 
Indeed, for each $1\leq p<\infty$, $\h^c_p(\M)$ can be identified with a subspace of $L_p(\M\overline{\otimes}\mathrm{B}(\ell_2(\N^2)))$. 
Recall that $\h^c_p(\M)$ is also defined as the closure in $L_p^{\mathrm{cond}}(\M;\ell^c_2)$ 
of all finite martingale differences in $\M$. 
Here $L_p^{\mathrm{cond}}(\M;\ell^c_2)$ is the subspace of $L_p(\M,\ell^c_2(\N^2))$ introduced by Junge \cite{J} 
consisting of all double indexed sequences $(x_{nk})$ such that $x_{nk}\in L_p(\M_n)$ for all $k\in \N$. 
We refer to \cite{PX1} for details on the column and row spaces $L_p(\M,\ell^c_2)$ and $L_p(\M,\ell^r_2)$. 
Furthermore, by the H\"{o}lder inequality and duality, recalling that the trace is finite, 
we have, for $1\leq p<q<\infty$, the continuous inclusions
\begin{equation*}
 L_\infty(\M)\subset \bmo^c(\M) \subset \h_q^c(\M) \subset \h_p^c(\M).
\end{equation*}
The first inclusion is proved by \eqref{eq:bmo_Linfty}. 
The second one comes from the third one by duality. 
Indeed, it is  proved in \cite{JX1} that for $1<p<\infty$ and $\frac{1}{p}+\frac{1}{p'}=1$, 
we have $(\h^c_{p}(\M))^*=\h^c_{p'}(\M)$, 
and, as already mentioned above, we have $(\h^c_{1}(\M))^*=\bmo^c(\M)$ (see \cite{P}). 
Note that $ L_\infty(\M)$ is dense in all spaces above, except $ \bmo^c(\M)$. 
This implies that $\bmo^c(\M)$ and $\h_q^c(\M)$ are dense in $\h_p^c(\M)$ for $1\leq p<q<\infty$.
\end{rk}

We will need Wolff's interpolation theorem (see \cite{Wo}). 
This result states that given Banach spaces $E_i$ $(i=1,2,3,4)$ 
such that $E_1\cap E_4$ is dense in both $E_2$ and $E_3$, and
$$E_2=(E_1,E_3)_\theta \quad \mbox{ and } \quad E_3=(E_2,E_4)_\phi$$
for some $0<\theta,\phi<1$, then
\begin{equation}\label{eq:wolff}
E_2=(E_1,E_4)_\varsigma \quad \mbox{ and } \quad E_3=(E_1,E_4)_\xi,
\end{equation}
where $\varsigma=\frac{\theta\phi}{1-\theta+\theta\phi}$ 
and $\xi=\frac{\phi}{1-\theta+\theta\phi}$. 
The main step of the proof of Theorem \ref{th:interp_bmoc_h1c} is the following lemma 
which is based on the equivalent quasinorm $N^c_p$ of $\|\cdot\|_{\h^c_p}$ described in the previous section.

\begin{lemma}\label{le:interp_hpc}
Let $1<p<\infty$ and $0<\theta<1$. 
Then, the following holds with equivalent norms
\begin{equation}\label{eq:interp_hpc}
(\h_1^c(\M), \h^c_p(\M))_{\theta}=\h^c_{q}(\M),
\end{equation}
where $\frac{1-\theta}{1}+\frac{\theta}{p}=\frac{1}{q}$.
\end{lemma}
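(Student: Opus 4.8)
The plan is to prove the two inclusions of \eqref{eq:interp_hpc} separately, using the equivalent quasinorm $N^c_q$ from Proposition \ref{prop:eq_norm} to handle the harder direction. For the easy inclusion $(\h_1^c(\M),\h_p^c(\M))_\theta \subset \h_q^c(\M)$, I would observe that the identity map is bounded from $\h_1^c(\M)$ into itself and from $\h_p^c(\M)$ into itself trivially; more usefully, since $N^c_r$ is a genuinely interpolatory-looking expression, I would instead realize each $\h_r^c(\M)$ isometrically (up to the constants in \eqref{eq:eq_norm}) inside a weighted $L_2$-type space. Concretely, fixing a single index sequence $\{w_n\}\in W$ and using it as a ``weight'', the map $x\mapsto (dx_{n+1}w_n^{1/2-1/r})_n$ sends $\h_r^c(\M)$ into $\ell_2(L_2(\M))$ with norm controlled by $N^c_r(x)$; interpolating the weighted $L_2$ spaces by the complex method (which is elementary for Hilbert-space-valued $L_2$, cf. \eqref{eq:cinterLp}) and then taking infima/suprema over $W$ should yield the embedding $(\h_1^c,\h_p^c)_\theta\hookrightarrow \h_q^c$ with the arithmetic of exponents $\frac1q=\frac{1-\theta}{1}+\frac{\theta}{p}$ coming out automatically, since $\frac12-\frac1q$ interpolates linearly between $\frac12-1$ and $\frac12-\frac1p$.

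For the reverse inclusion $\h_q^c(\M)\subset (\h_1^c(\M),\h_p^c(\M))_\theta$, the natural route is by duality: by Theorem \ref{th:duality_hp} we have $(\h_r^c(\M))^* = X^c_{r'}$ where $\frac{1}{r'}=1-\frac1r$, the dual norm being the $\sup_W$ expression $M^c_{r'}$. Since the family $X^c_s$, $-\infty<1/s\le 1/2$, is (by the same weighted-$L_2$ device, now with suprema over $W$) itself a complex interpolation scale, duality for complex interpolation — valid here because $L_\infty(\M)$, hence a dense subspace, sits densely in $\h_1^c(\M)$ by Remark \ref{rk:density}, so at least one endpoint is reflexive-friendly — gives $\big((\h_1^c(\M),\h_p^c(\M))_\theta\big)^* = (X^c_\infty, X^c_{p'})_\theta = X^c_{q'} = (\h_q^c(\M))^*$ with equivalent norms. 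Combined with the easy inclusion above and the fact that $\h_q^c(\M)$ is reflexive for $1<q<\infty$ (by Burkholder, \eqref{eq:burkh}, $\h_q^c(\M)$ is complemented in $L_q(\M)$), this forces equality of the two spaces.

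The main obstacle I anticipate is the duality step: complex interpolation does not commute with duality for free, and one must check the relevant density hypotheses (namely that a common dense subspace, e.g. finite $L_\infty$-martingales, is dense in both $\h_1^c(\M)$ and $\h_p^c(\M)$, which is exactly what Remark \ref{rk:density} provides) and that the weighted-$L_2$ representation is compatible with taking adjoints so that $M^c_s$ really is the dual norm of $N^c_s$ in an interpolation-friendly way. A secondary technical point is justifying that the infimum over the index class $W$ in the definition of $N^c_q$ behaves well under interpolation — this is where I would be most careful, because $W$ is a nonlinear constraint set; the cleanest fix is probably to first prove the lemma with $W$ replaced by a fixed weight and then pass to the infimum/supremum, checking that the interpolation constants do not blow up. If the duality argument proves delicate, an alternative is to invoke Wolff's theorem \eqref{eq:wolff} with the four spaces $\bmo^c$, $\h_q^c$, $\h_p^c$, $\h_1^c$ for suitable exponents, bootstrapping from the two ``adjacent'' interpolations, but this still needs the present lemma (or a special case of it) as input, so I would prove \eqref{eq:interp_hpc} directly by the weighted-$L_2$ method first.
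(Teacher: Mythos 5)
Your overall instinct---work with the quasinorm $N^c_p$ and neutralize the nonlinear index class $W$ by fixing a single weight---is the right one, but you have attached it to the wrong inclusion, and the inclusion you propose to settle by duality is where the real content lies. For a \emph{fixed} $\{w_n\}\in W$ one has $\big\|(dx_{n+1}w_n^{1/2-1/r})_n\big\|_{\ell_2(L_2(\M))}=\big[\tau\big(\sum_n w_n^{1-2/r}|dx_{n+1}|^2\big)\big]^{1/2}\geq N^c_r(x)\geq\|x\|_{\h_r^c}$ (the infimum over $W$ and then \eqref{eq:eq_norm}), so the map $x\mapsto(dx_{n+1}w_n^{1/2-1/r})_n$ is \emph{not} bounded from $\h_r^c(\M)$ into a weighted $L_2$-space; the near-optimal weight depends on the individual element and on $r$, so ``interpolate for fixed $w$, then take infima'' cannot produce the embedding $(\h_1^c(\M),\h_p^c(\M))_\theta\subset\h_q^c(\M)$. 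What the fixed-weight device does give---and this is exactly how the paper argues---is the \emph{reverse} inclusion $\h_q^c(\M)\subset(\h_1^c(\M),\h_p^c(\M))_\theta$: choose $w$ nearly optimal for $N^c_q(x)$ and write down the explicit analytic family $f_\varepsilon(z)=\exp(\varepsilon(z^2-\theta^2))\sum_n dx_{n+1}w_n^{1-(1-1/p)z-1/q}$, so that plugging the \emph{same} $w$ into the infima defining $N^c_1$ and $N^c_p$ bounds $\|f_\varepsilon(it)\|_{\h_1^c}$ and $\|f_\varepsilon(1+it)\|_{\h_p^c}$ by $e^{\varepsilon}(2/q)^{1/2}$. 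The inclusion $(\h_1^c(\M),\h_p^c(\M))_\theta\subset\h_q^c(\M)$ is then the genuinely easy one: it follows from the isometric realization of $\h_r^c(\M)$ inside $L_r(\M\overline{\otimes}\mathrm{B}(\ell_2(\N^2)))$ of Remark \ref{rk:density} together with \eqref{eq:cinterLp}; no weights are needed there.

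Your duality argument for $\h_q^c(\M)\subset(\h_1^c(\M),\h_p^c(\M))_\theta$ has a more serious defect: it presupposes that the dual scale $X^c_s$ interpolates, including the endpoint $X^c_\infty$, which (via the dual form of Junge's Doob inequality) is a $\bmo^c$-type space; but interpolation of $\bmo^c(\M)$ against the $\h^c$-scale is precisely the content of Lemma \ref{le:interp_bmoc_hqc} and Theorem \ref{th:interp_bmoc_h1c}, and in the paper these are \emph{deduced} from the present lemma by the duality theorem of \cite{BL}. Taking the dual-scale interpolation as an ingredient here is therefore circular (and the supremum over the nonconvex class $W$ makes a direct proof of it no easier than the original problem); the same objection applies to your fallback Wolff quadruple containing $\bmo^c$. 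Finally, Proposition \ref{prop:eq_norm} gives $N^c_p\approx\|\cdot\|_{\h_p^c}$ only for $0<p\leq2$, so the direct fixed-weight argument can only cover $1<q<p\leq2$; to reach $2<p<\infty$ you still need the interior scale \eqref{eq:cinterp_hq}, obtained by complementation of $\h_p^c(\M)$ in $L_p(\M\overline{\otimes}\mathrm{B}(\ell_2(\N^2)))$ via Junge's Stein projection, and then Wolff's theorem applied to quadruples of $\h^c$-spaces only (never $\bmo^c$), which is how the paper completes the proof.
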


\begin{proof} 
\textbf{Step 1:}
We first prove \eqref{eq:interp_hpc} in the case $1<q<p\leq 2$. 
As explained in Remark \ref{rk:density}, $\h^c_p(\M)$ can be identified with a subspace of \\
$L_p(\M\overline{\otimes}\mathrm{B}(\ell_2(\N^2)))$. 
Thus the interpolation between noncommutative $L_p$-spaces in \eqref{eq:cinterLp} 
gives the inclusion $(\h_1^c(\M), \h^c_p(\M))_{\theta}\subset \h^c_{q}(\M)$. 

The reverse inclusion needs more efforts. 
This can be shown using the equivalent quasinorm $N^c_p$ of $\|\cdot\|_{\h^c_p}$ defined previously. 
Let $x$ be an $L_2$-finite martingale such that $\|x\|_{\h^c_{q}}< 1$. 
By \eqref{eq:eq_norm} we have 
$$N^c_{q}(x)=\inf_W \Big[\tau\Big(\sum_nw_n^{1-2/q}|dx_{n+1}|^2\Big)\Big]^{1/2} 
< \Big(\frac{2}{q}\Big)^{1/2}.$$
Let $\{w_n\} \in W$ be such that 
\begin{equation}\label{eq:fix_wn}
\tau\Big(\displaystyle\sum_n w_n^{1-2/q}|dx_{n+1}|^2\Big)< \frac{2}{q}.
\end{equation} 
For $\varepsilon>0$ and $z\in S$ we define  
$$\begin{array}{ccl}
f_\varepsilon(z)&=&\exp(\varepsilon (z^2-\theta^2))
\displaystyle\sum_n dx_{n+1}w_n^{\frac{1}{2}-\frac{1}{q}}w_n^{\frac{1-z}{1}+\frac{z}{p}-\frac{1}{2}}\\
&=&\exp(\varepsilon (z^2-\theta^2))
\displaystyle\sum_n dx_{n+1}w_n^{1-(1-\frac{1}{p})z-\frac{1}{q}}.
\end{array}$$
Then $f_\varepsilon$ is continuous on $S$, analytic on $S_0$
and $f_\varepsilon(\theta)=x$. 
The term $\exp(\varepsilon (z^2-\theta^2))$ ensure that $f_\varepsilon(it)$ and $f_\varepsilon(1+it)$ 
tend to $0$ as $t$ goes to infinity. 
A direct computation gives for all $t\in \R$
$$ \tau\Big(\sum_nw_n^{-1}|d(f_\varepsilon)_{n+1}(it)|^2\Big)
=\exp(-2\varepsilon(t^2+\theta^2))\tau\Big(\sum_n w_n^{1-2/q}|dx_{n+1}|^2\Big).$$ 
By \eqref{eq:fix_wn} and \eqref{eq:eq_norm} we obtain
$$\|f_\varepsilon(it)\|_{\h^c_1}\leq \exp(\varepsilon) \Big(\frac{2}{q}\Big)^{1/2}.$$
Similarly,
$$\|f_\varepsilon(1+it)\|_{\h^c_p}\leq \exp(\varepsilon) \Big(\frac{2}{q}\Big)^{1/2}.$$
Thus $x=f_\varepsilon(\theta) \in (\h_1^c(\M), \h^c_p(\M))_{\theta}$ and 
$$\|x\|_{(\h_1^c(\M), \h^c_p(\M))_{\theta}}
\leq\exp(\varepsilon) \Big(\frac{2}{q}\Big)^{1/2};$$
whence
$$\|x\|_{(\h_1^c(\M), \h^c_p(\M))_{\theta}}\leq \Big(\frac{2}{q}\Big)^{1/2} \|x\|_{\h^c_{q}}.$$

\hspace{-0.4cm}\textbf{Step 2:} 
To obtain the general case, we use Wolff's interpolation theorem mentioned above. 
Let us first recall that for $1<v,s,q<\infty$ and $0<\eta<1$ 
such that $\frac{1}{q}=\frac{1-\eta}{v}+\frac{\eta}{s}$, 
we have with equivalent norms
\begin{equation}\label{eq:cinterp_hq}
(\h_v^c(\M), \h_s^c(\M))_{\eta}=\h^c_{q}(\M).
\end{equation}
Indeed, by Lemma $6.4$ of \cite{JX1}, 
$\h^c_p(\M)$ is one-complemented in $L_p^{\mathrm{cond}}(\M;\ell^c_2)$, for $1\leq p <\infty$. 
On the other hand, for $1<p<\infty$ 
the space $L_p^{\mathrm{cond}}(\M,\ell^c_2)$ is complemented in $L_p(\M,\ell^c_2(\N^2))$ via Stein's projection (Theorem $2.13$ of \cite{J}), 
and the column space $L_p(\M;\ell^c_2(\N^2))$ is a one-complemented subspace of $L_p(\M\overline{\otimes}\mathrm{B}(\ell_2(\N^2)))$. 
Thus, we conclude from \eqref{eq:cinterLp} that, by complementation, \eqref{eq:cinterp_hq} holds. 

We turn to the proof of \eqref{eq:interp_hpc}. 
Step $1$ shows that \eqref{eq:interp_hpc} holds in the case $1<p\leq 2$. 
Thus it remains to deal with the case $2<p<\infty$.  We divide the proof in two cases.\\
{\it Case 1:} $1<q<2<p<\infty$. 
Let $q<s<2$. Note that $1<q<s<p$, so there exist $0<\theta<1$ and $0<\phi<1$ such that 
$\frac{1-\theta}{1}+\frac{\theta}{s}=\frac{1}{q}$ and $\frac{1-\phi}{q}+\frac{\phi}{p}=\frac{1}{s}$. 
By \eqref{eq:cinterp_hq} we have
$$\h^c_s(\M)=(\h^c_q(\M),\h^c_p(\M))_\phi.$$
Furthermore, recall that $1<q<s<2$, so Step $1$ yields 
$$\h^c_q(\M)=(\h^c_1(\M),\h^c_s(\M))_\theta.$$
By Wolff's interpolation theorem \eqref{eq:wolff}, it follows that
$$\h^c_q(\M)=(\h^c_1(\M),\h^c_p(\M))_\varsigma,$$
where $\varsigma=\frac{\theta\phi}{1-\theta+\theta\phi}$. 
A simple computation shows that $\frac{1-\varsigma}{1}+\frac{\varsigma}{p}=\frac{1}{q}$.\\
{\it Case 2:} $2<q<p<\infty$. 
By a similar argument, we easily deduce this case from the previous one and \eqref{eq:cinterp_hq} using Wolff's theorem.

Note that in both cases, the density assumption of Wolff's theorem is ensured by Remark \ref{rk:density}.
\end{proof}

\begin{lemma}\label{le:interp_bmoc_hqc}
Let $1< q< p<\infty$.
Then, the following holds with equivalent norms
\begin{equation}\label{eq:interp_bmoc_hqc}
(\bmo^c(\M), \h^c_q(\M))_{\frac{q}{p}}=\h^c_{p}(\M).
\end{equation}
\end{lemma}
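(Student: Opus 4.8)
The plan is to combine Lemma \ref{le:interp_hpc} with the general case of Lemma \ref{le:interp_hpc} and Wolff's interpolation theorem, exactly in the spirit of Step 2 of the previous proof but now with $\bmo^c(\M)$ playing the role of the left endpoint. First I would set up the four spaces for Wolff's theorem as $E_1=\bmo^c(\M)$, $E_2=\h^c_p(\M)$, $E_3=\h^c_q(\M)$, $E_4=\h^c_1(\M)$; note the reversal of the usual order, since for $1\le q<p<\infty$ we have the inclusions $\bmo^c(\M)\subset\h^c_p(\M)\subset\h^c_q(\M)\subset\h^c_1(\M)$ recorded in Remark \ref{rk:density}. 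The density hypothesis $E_1\cap E_4$ dense in both $E_2$ and $E_3$ is exactly the density statement at the end of Remark \ref{rk:density} (indeed $E_1\cap E_4=\bmo^c(\M)$, which is dense in every $\h^c_r(\M)$ for $1\le r<\infty$ since $L_\infty(\M)\subset\bmo^c(\M)$ is).

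Next I would produce the two interpolation identities that feed Wolff's theorem. For the identity $E_3=(E_2,E_4)_\phi$, i.e. $\h^c_q(\M)=(\h^c_p(\M),\h^c_1(\M))_\phi$ for a suitable $\phi\in(0,1)$, I would simply invoke Lemma \ref{le:interp_hpc} (which covers $(\h^c_1(\M),\h^c_p(\M))_\theta=\h^c_r(\M)$ for all $1<p<\infty$, $0<\theta<1$), choosing $\phi$ so that $\frac{1-\phi}{p}+\frac{\phi}{1}=\frac{1}{q}$. For the identity $E_2=(E_1,E_3)_\theta$, i.e. $\h^c_p(\M)=(\bmo^c(\M),\h^c_q(\M))_\theta$, there is a subtlety: a priori this is what we are trying to prove. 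The way around this is an induction or bootstrapping argument on the parameter, as is standard for this Wolff-type scheme: one first establishes \eqref{eq:interp_bmoc_hqc} in a restricted range where it can be obtained directly — e.g. using the duality $(\h^c_1(\M))^*=\bmo^c(\M)$ together with \eqref{eq:cinterp_hq} and a reiteration/duality argument — and then extends via Wolff to the full range $1<q<p<\infty$. Concretely, I expect the cleanest route is: prove $(\bmo^c(\M),\h^c_1(\M))_\theta=\h^c_r(\M)$ first (this is really Theorem \ref{th:interp_bmoc_h1c}, so one may take it as the base case, or prove a weak version of it here), and then deduce the general $(\bmo^c(\M),\h^c_q(\M))_{q/p}=\h^c_p(\M)$ by reiteration using \eqref{eq:cinterp_hq}.

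Then I would run Wolff's theorem: from $E_2=(E_1,E_3)_\theta$ and $E_3=(E_2,E_4)_\phi$ it outputs $E_2=(E_1,E_4)_\varsigma$ and $E_3=(E_1,E_4)_\xi$ with $\varsigma=\frac{\theta\phi}{1-\theta+\theta\phi}$, $\xi=\frac{\phi}{1-\theta+\theta\phi}$; translating back, $\h^c_p(\M)=(\bmo^c(\M),\h^c_1(\M))_\varsigma$. The remaining work is a bookkeeping computation on the parameters: matching $\frac{1}{p}$ with the reciprocal relation coming from interpolating $\bmo^c(\M)$ (thought of as the endpoint $p=\infty$) with $\h^c_1(\M)$, and checking consistency of $\theta$, $\phi$, $\varsigma$, $\xi$ with the arithmetic relations $\frac{1-\phi}{p}+\frac{\phi}{1}=\frac{1}{q}$ and the target $\frac{q}{p}$. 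Finally, to get the stated form $(\bmo^c(\M),\h^c_q(\M))_{q/p}=\h^c_p(\M)$ rather than the version with $\h^c_1(\M)$ as right endpoint, I would reiterate once more using \eqref{eq:cinterp_hq}: $\h^c_q(\M)$ itself is an interpolation space between $\bmo^c(\M)$ and $\h^c_1(\M)$ (resp.\ between two $\h^c$ spaces), and the reiteration theorem for the complex method then rewrites the endpoints as desired.

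The main obstacle is the circularity issue in establishing $\h^c_p(\M)=(\bmo^c(\M),\h^c_q(\M))_\theta$ needed as one of Wolff's hypotheses: one cannot assume the lemma to prove the lemma, so the argument must be organized as an induction that starts from a directly provable case. The natural base case is the one with $\h^c_1(\M)$ on the right, obtainable by combining Step 1 of Lemma \ref{le:interp_hpc}'s proof technique (the $f_\varepsilon$ three-lines construction, now with a $\bmo$-type upper estimate via the $M^c_q$/$N^c_q$ duality of Theorem \ref{th:duality_hp}) with the $L_p$ interpolation for the easy inclusion. Once that base case is in hand, the Wolff + reiteration machinery is routine, and the only care needed is to verify that the density hypothesis of Wolff's theorem holds at every stage, which Remark \ref{rk:density} guarantees.
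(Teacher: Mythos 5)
There is a genuine gap: your Wolff scheme takes as one of its two inputs the identity $E_2=(E_1,E_3)_\theta$, i.e.\ $\h^c_p(\M)=(\bmo^c(\M),\h^c_q(\M))_\theta$, which is exactly the statement of the lemma, and the way you propose to break the circle --- prove $(\bmo^c(\M),\h^c_1(\M))_{1/p}=\h^c_p(\M)$ first as a ``base case'' --- is never actually carried out. In the paper the logical order is the opposite: Theorem \ref{th:interp_bmoc_h1c} is \emph{deduced} from the present lemma (together with Lemma \ref{le:interp_hpc}) via Wolff, so taking it as a base case is circular relative to the intended architecture; and your suggested direct proof of that base case (the $f_\varepsilon$ three-lines construction ``with a $\bmo$-type upper estimate'') is precisely the hard step the section is designed to avoid. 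The $N^c_p$/$M^c_q$ machinery of Section 3 gives boundary estimates in $\h^c_1$ and $\h^c_p$ for $p\leq 2$; it does not give a $\bmo^c$ bound on a boundary line, and obtaining one is essentially the Musat-type endpoint result, not a routine modification.

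The paper's proof of this lemma is instead a one-line duality argument: apply the duality theorem for complex interpolation (Theorem 4.5.1 of \cite{BL}) to Lemma \ref{le:interp_hpc}, i.e.\ dualize $(\h^c_1(\M),\h^c_{p'}(\M))_\theta=\h^c_{q'}(\M)$ using $(\h^c_1(\M))^*=\bmo^c(\M)$ and $(\h^c_{r}(\M))^*=\h^c_{r'}(\M)$ for $1<r<\infty$ (both recalled in Remark \ref{rk:density}, which also supplies the density needed for the duality theorem). This yields \eqref{eq:interp_bmoc_hqc} in the full range $1<q<p<\infty$ at once, with no Wolff argument and no reiteration at this stage; Wolff enters only afterwards, to pass from this lemma to the $q=1$ endpoint of Theorem \ref{th:interp_bmoc_h1c}. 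Your parenthetical remark about ``using the duality $(\h^c_1)^*=\bmo^c$ together with \eqref{eq:cinterp_hq}'' gestures at this ingredient, but as written your argument either assumes the lemma (or the stronger Theorem \ref{th:interp_bmoc_h1c}) or leaves the decisive endpoint estimate unproved.
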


\begin{proof}
Applying the duality theorem $4.5.1$ of \cite{BL} to \eqref{eq:interp_hpc} 
we obtain \eqref{eq:interp_bmoc_hqc} in the case $1<q<p<\infty$ with $\theta=\frac{q}{p}$. 
Here we used the description of the dual space of $\h^c_{p}(\M)$ for $1\leq p <\infty$ mentioned in Remark \ref{rk:density}.
\end{proof}

\hspace{-0.4cm}{\it Proof of Theorem \ref{th:interp_bmoc_h1c}.}
We want to extend \eqref{eq:interp_bmoc_hqc} to the case $q=1$. 
To this aim we again use Wolff's interpolation theorem combined with the two previous lemmas. 
Let $1<q<p<\infty$. 
Then there exists $0<\phi<1$ such that $\frac{1-\phi}{1}+\frac{\phi}{p}=\frac{1}{q}$. 
We set $\theta=\frac{q}{p}$. 
Thus by Lemma \ref{le:interp_bmoc_hqc} we have
$$\h^c_p(\M)=(\bmo^c(\M),\h^c_q(\M))_\theta.$$
Moreover we deduce from Lemma \ref{le:interp_hpc} that
$$\h^c_q(\M)=(\h_1^c(\M),\h^c_p(\M))_\phi.$$
So Wolff's result yields
$$\h^c_p(\M)=(\bmo^c(\M),\h^c_1(\M))_\varsigma,$$
where $\varsigma=\frac{\theta\phi}{1-\theta+\theta\phi}$. 
An easy computation gives $\varsigma=\frac{1}{p}$, 
and this ends the proof of \eqref{eq:interpolation}
\cqd

\vspace{0.3cm}

The previous results concern the conditioned column Hardy space. 
We now consider the whole conditioned Hardy space, and get the analogue result.

\begin{theorem}\label{th:interp_bmo_h1}
Let $1<p<\infty$. 
Then, the following holds with equivalent norms
$$(\bmo(\M), \h_1(\M))_{\frac{1}{p}}=\h_{p}(\M).$$
\end{theorem}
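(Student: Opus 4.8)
The plan is to derive the statement for the full conditioned Hardy space $\h_p(\M)$ from the column result in Theorem \ref{th:interp_bmoc_h1c}, together with the analogous row version and the (trivial) diagonal part, using the stability of complex interpolation under direct sums and complemented subspaces. Recall from Section $1$ that for $1<p<\infty$ we have $\h_p(\M)=L_p(\M)=\h_p^c(\M)\cap\h_p^r(\M)\cap\h_p^d(\M)$ with equivalent norms (the noncommutative Burkholder inequalities \eqref{eq:burkh}), and that $\bmo(\M)$ should be taken as $\bmo^c(\M)\cap\bmo^r(\M)$ with the natural intersection norm. So the natural strategy is: first record the row analogue $(\bmo^r(\M),\h_1^r(\M))_{1/p}=\h_p^r(\M)$, which follows from Theorem \ref{th:interp_bmoc_h1c} verbatim after applying the adjoint map $x\mapsto x^*$ (an isometry exchanging all column and row spaces). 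Second, observe that the diagonal space presents no difficulty: for $1<p<\infty$, $\h_p^d(\M)=\ell_p(L_p(\M))\cap(\text{martingale differences})$, and interpolating the constant scale, $(\ell_\infty(L_\infty(\M)),\ell_1(L_1(\M)))_{1/p}=\ell_p(L_p(\M))$ isometrically by \eqref{eq:cinterLp} applied coordinatewise; restricting to the closed (complemented) subspace of martingale difference sequences via the projection $(a_n)\mapsto(a_n-\E_{n-1}a_n)$ gives the diagonal statement, once one chooses the right "$\bmo^d$" to be $\ell_\infty(L_\infty)\cap(\text{diff})$ — but in fact for the intersection formulation of $\bmo$ we may just as well absorb the diagonal into the column/row pieces since, as noted after \eqref{eq:bmo_Linfty} and in Remark \ref{rk:density}, $L_\infty(\M)\subset\bmo^c(\M)$ continuously.

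Concretely, I would form the direct sum couple $(\bmo^c(\M)\oplus\bmo^r(\M),\ \h_1^c(\M)\oplus\h_1^r(\M))$ in, say, the $\ell_\infty$ (resp. $\ell_1$) sense, apply the fact that complex interpolation commutes with finite direct sums (Theorem $1.18.1$ of \cite{BL}, or a direct computation with the three-lines argument), and use Theorem \ref{th:interp_bmoc_h1c} coordinatewise to get
\begin{equation*}
\big(\bmo^c(\M)\oplus\bmo^r(\M),\ \h_1^c(\M)\oplus\h_1^r(\M)\big)_{1/p}=\h_p^c(\M)\oplus\h_p^r(\M)
\end{equation*}
with equivalent norms. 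Then $\h_p(\M)$ for $1<p<\infty$, being $\h_p^c(\M)\cap\h_p^r(\M)$ (up to equivalence, and up to the diagonal part which by \eqref{eq:burkh} and $L_p=\h_p$ is already controlled), is the image of the diagonal embedding $x\mapsto(x,x)$ of this sum into itself; one checks this embedding realizes $\h_p(\M)$ as a complemented subspace of $\h_p^c\oplus\h_p^r$ (the complementation on the $L_p$-level, $1<p<\infty$, coming from \eqref{eq:burkh}), and similarly $\bmo(\M)=\bmo^c(\M)\cap\bmo^r(\M)$ sits inside $\bmo^c\oplus\bmo^r$ as a complemented — in any case closed and compatibly embedded — subspace. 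Since complex interpolation passes to complemented subspaces (more precisely, one uses the standard retraction/corretraction lemma, Theorem $1.17.1.1$ of \cite{BL}), intersecting the displayed identity with the diagonal yields $(\bmo(\M),\h_1(\M))_{1/p}=\h_p(\M)$.

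One must be a little careful about \textbf{which} $\bmo(\M)$ is meant and about the density hypotheses. The density of $\h_1^c(\M)\cap\bmo^c(\M)$ in each interpolation couple, needed throughout, is exactly Remark \ref{rk:density}, and it passes to the direct sum and to the diagonal subspace without trouble since $L_\infty(\M)$ (embedded diagonally) is dense in $\h_p(\M)$ for all $p<\infty$. For the diagonal/intersection argument, the cleanest route is to first treat the column-plus-row statement $(\bmo^c\cap\bmo^r,\h_1^c\cap\h_1^r)_{1/p}=\h_p^c\cap\h_p^r$ by retracting from the sum, and then separately to note that the $\h_p^d$ components interpolate trivially as above; combining via \eqref{eq:burkh} (which identifies $\h_p(\M)$ with $L_p(\M)$ and makes the decomposition into $c$, $r$, $d$ parts a bounded projection decomposition for $1<p<\infty$) gives the full result.

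The \textbf{main obstacle} I anticipate is purely bookkeeping rather than conceptual: making the "complemented subspace" / "retraction–corretraction" step genuinely correct requires exhibiting a single pair of bounded maps $J:\h_p(\M)\to\h_p^c\oplus\h_p^r\oplus\h_p^d$ and $P$ going back with $PJ=\mathrm{id}$, that are simultaneously bounded on the $\bmo$-end and the $\h_1$-end of the couple; the map $J$ is just $x\mapsto(x,x,dx)$ and is bounded on every relevant space essentially by definition, but $P$ — essentially "take the column part, or average the three" — must be checked to be bounded $\bmo^c\oplus\bmo^r\oplus(\ell_\infty\cap\mathrm{diff})\to\bmo(\M)$ as well as $\h_1^c\oplus\h_1^r\oplus\h_1^d\to\h_1(\M)$, and the latter is immediate from the very definition of the sum norm on $\h_1(\M)$ while the former needs the elementary inequality that $\|y+z+w\|_{\bmo}\lesssim\|y\|_{\bmo^c}+\|z\|_{\bmo^r}+\|w\|_{\ell_\infty}$, which follows from \eqref{eq:bmo_Linfty} and the triangle inequality for the $\bmo$ seminorms. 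Once $P$ and $J$ are in hand, the identity is a one-line application of the standard interpolation-of-retracts lemma, and no new analysis (no new three-lines argument) is needed beyond Theorem \ref{th:interp_bmoc_h1c}.
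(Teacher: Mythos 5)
There is a genuine gap, and it sits exactly where you locate the ``main obstacle'': the retraction/corretraction step cannot be made to work, because the two endpoints of the couple have opposite structures. At the $\bmo$ end the space is an intersection, but at the other end $\h_1(\M)=\h_1^d(\M)+\h_1^c(\M)+\h_1^r(\M)$ is a \emph{sum}, so the map $J(x)=(x,x,dx)$ is not bounded (indeed not even defined) on $\h_1(\M)$: a generic element of $\h_1(\M)$ need not lie in $\h_1^c(\M)$ at all, so $\|x\|_{\h_1^c}$ is not controlled by $\|x\|_{\h_1}$. Dually, if you instead take $P(y,z,w)=y+z+w$ (which is bounded into $\h_1(\M)$ by definition of the sum norm), then its boundedness at the other endpoint would require precisely the inequality you invoke, $\|y+z+w\|_{\bmo}\lesssim\|y\|_{\bmo^c}+\|z\|_{\bmo^r}+\|w\|_{\ell_\infty}$, and this inequality is false: take $z=w=0$ and $y\in\bmo^c(\M)\setminus\bmo^r(\M)$; the right-hand side is finite while the left-hand side is not, since $\|y\|_{\bmo}$ requires control of both the column and the row conditions on $y$. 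So no single pair $(J,P)$ is simultaneously bounded at both endpoints; if such a pair existed it would amount to a bounded linear realization of the $d+c+r$ decomposition at the $\h_1$ level, which is exactly what is \emph{not} available (compare the open problems on explicit Davis-type decompositions in the appendix). A secondary error feeding into this: for $1<p<2$ one does not have $\h_p(\M)=\h_p^c(\M)\cap\h_p^r(\M)\cap\h_p^d(\M)$; Burkholder gives $L_p(\M)=\h_p^d+\h_p^c+\h_p^r$ in that range, the intersection identity holding only for $p\geq 2$. Also, the correct $\bmo(\M)$ dual to $\h_1(\M)$ carries a diagonal component in addition to $\bmo^c\cap\bmo^r$.

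The paper's route is asymmetric precisely to avoid this trap. It first proves $(\h_1(\M),\h_p(\M))_{\theta}=\h_q(\M)$ for $1<p\leq 2$: the \emph{hard} inclusion $\h_q(\M)\subset(\h_1(\M),\h_p(\M))_\theta$ uses only the one-sided Lemma \ref{le:add_interpolation} (sum of interpolation spaces embeds into interpolation of sums — the direction that is actually available), applied to the column, row (via $N_p^r$) and diagonal (Lemma \ref{le:interpolation_hpd}) pieces separately; the \emph{reverse} inclusion is obtained by a completely different mechanism, namely Burkholder's identification $\h_p(\M)=L_p(\M)$ together with $\h_1(\M)\subset L_1(\M)$ and the $L_p$-interpolation \eqref{eq:cinterLp}. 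Only after this does one pass to the $\bmo$ endpoint by duality and Wolff's theorem, as in Theorem \ref{th:interp_bmoc_h1c}. If you want to salvage your direct-sum picture, you would have to replace the complementation step by this two-sided argument (one inclusion from the sum lemma, the other from the $L_p$-picture); as written, the retraction step is not bookkeeping but the crux, and it fails.
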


The proof of Theorem \ref{th:interp_bmo_h1} is similar to that of Theorem \ref{th:interp_bmoc_h1c}. 
Indeed, we need the analogue of Lemma \ref{le:interp_hpc} for $\h_p(\M)$, 
and the result will follow from the same arguments. 
By Wolff's result, it thus remains to show that 
$(\h_1(\M),\h_p(\M))_{\theta}=\h_q(\M)$ for $1<p\leq 2$, 
where $\frac{1-\theta}{1}+\frac{\theta}{p}=\frac{1}{q}$. 
Recall that for $1\leq p\leq 2$ the space $\h_p(\M)$ is defined as a sum of three components
$$\h_p(\M)=\h_p^d(\M)+\h_p^c(\M)+\h_p^r(\M).$$
We will consider each component, and then will sum the interpolation results. 
The following lemma describe the behaviour of complex interpolation with addition.

\begin{lemma}\label{le:add_interpolation}
Let $(A_0,A_1)$ and $(B_0,B_1)$ be two compatible couples of Banach spaces. 
Then for $0<\theta<1$ we have 
$$(A_0,A_1)_\theta+(B_0,B_1)_\theta\subset (A_0+B_0,A_1+B_1)_\theta.$$
\end{lemma}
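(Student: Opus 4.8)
The plan is to work directly from the definition of the complex interpolation space via the Calderón space $\mathcal{F}(X_0,X_1)$ of bounded continuous functions on the closed strip $S$, analytic on $S_0$, with the appropriate boundary behaviour. Let $a\in (A_0,A_1)_\theta$ and $b\in (B_0,B_1)_\theta$, and fix $\varepsilon>0$. By definition of the quotient norm on $(A_0,A_1)_\theta$ we may choose $f\in \mathcal{F}(A_0,A_1)$ with $f(\theta)=a$ and $\|f\|_{\mathcal{F}(A_0,A_1)}\leq \|a\|_{(A_0,A_1)_\theta}+\varepsilon$; similarly choose $g\in \mathcal{F}(B_0,B_1)$ with $g(\theta)=b$ and $\|g\|_{\mathcal{F}(B_0,B_1)}\leq \|b\|_{(B_0,B_1)_\theta}+\varepsilon$. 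The natural candidate for an admissible function for $a+b$ in $\mathcal{F}(A_0+B_0,A_1+B_1)$ is simply $h=f+g$, viewed as a function with values in the sum space $(A_0+B_0)+(A_1+B_1)$ — but to make sense of this one embeds all four spaces in the ambient space and notes that $A_j+B_j$ sits inside the larger sum, so $h(z)$ has a well-defined value there.

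The key points to check, in order, are: (1) $h$ is continuous and bounded on $S$ and analytic on $S_0$ as a function into the relevant sum space — immediate since $f$ and $g$ individually are; (2) the boundary traces satisfy $h(it)=f(it)+g(it)\in A_0+B_0$ with $\|h(it)\|_{A_0+B_0}\leq \|f(it)\|_{A_0}+\|g(it)\|_{B_0}$, and analogously $h(1+it)\in A_1+B_1$ with the corresponding bound; (3) consequently
$$\|h\|_{\mathcal{F}(A_0+B_0,A_1+B_1)}
\leq \max\Big(\sup_t\|f(it)\|_{A_0}+\sup_t\|g(it)\|_{B_0},\ \sup_t\|f(1+it)\|_{A_1}+\sup_t\|g(1+it)\|_{B_1}\Big),$$
which is at most $\|f\|_{\mathcal{F}(A_0,A_1)}+\|g\|_{\mathcal{F}(B_0,B_1)}$. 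Since $h(\theta)=a+b$, this shows $a+b\in (A_0+B_0,A_1+B_1)_\theta$ with norm at most $\|a\|_{(A_0,A_1)_\theta}+\|b\|_{(B_0,B_1)_\theta}+2\varepsilon$; letting $\varepsilon\to 0$ gives the claimed inclusion, with norm control $\|a+b\|_{(A_0+B_0,A_1+B_1)_\theta}\leq\|a\|_{(A_0,A_1)_\theta}+\|b\|_{(B_0,B_1)_\theta}$, and taking the infimum over decompositions of a general element of the sum $(A_0,A_1)_\theta+(B_0,B_1)_\theta$ finishes the argument.

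The only genuine subtlety — the step I expect to require the most care — is point (2): one must be sure that restricting to a boundary line and then measuring in $A_j+B_j$ really does obey the triangle-type estimate $\|f(it)+g(it)\|_{A_0+B_0}\leq\|f(it)\|_{A_0}+\|g(it)\|_{B_0}$, i.e. that the sum-space norm is dominated by summing the norms of the pieces, which is exactly the defining property of the sum norm once $f(it)\in A_0$ and $g(it)\in B_0$ are viewed inside $A_0+B_0$ via the compatibility embeddings. This is routine but is where the compatibility hypothesis on the couples is used, so it should be spelled out rather than skipped. Everything else is a direct unwinding of definitions.
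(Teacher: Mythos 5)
Your argument is correct and is precisely the one the paper has in mind: the paper dispenses with a proof by noting the lemma ``comes directly from the definition of complex interpolation,'' and your construction --- summing admissible functions $f\in\mathcal{F}(A_0,A_1)$ and $g\in\mathcal{F}(B_0,B_1)$ and estimating the boundary values of $f+g$ in the sum norms of $A_j+B_j$ --- is exactly that direct unwinding, with the contractive norm bound made explicit.
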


This result comes directly from the definition of complex interpolation. 

\begin{lemma}\label{le:interpolation_hpd}
Let $1\leq p_0<p_1\leq \infty, 0<\theta<1$. 
Then, the following holds with equivalent norms
$$(\h^d_{p_0}(\M),\h^d_{p_1}(\M))_\theta=\h^d_{p}(\M)$$
where $\frac{1}{p}=\frac{1-\theta}{p_0}+\frac{\theta}{p_1}$.
\end{lemma}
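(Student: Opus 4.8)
The statement to prove is $(\h^d_{p_0}(\M),\h^d_{p_1}(\M))_\theta=\h^d_p(\M)$ for $1\le p_0<p_1\le\infty$, $0<\theta<1$, $\frac1p=\frac{1-\theta}{p_0}+\frac{\theta}{p_1}$. The key observation is that $\h^d_p(\M)$ is, by definition, the subspace of $\ell_p(L_p(\M))$ consisting of martingale difference sequences, and $\ell_p(L_p(\M))$ is itself a noncommutative $L_p$-space, namely $L_p$ of the von Neumann algebra $\M\overline{\otimes}\ell_\infty(\N)$ (equivalently $L_p(\M;\ell_p)$). So the plan is to (a) identify $\ell_p(L_p(\M))$ as an interpolation scale via \eqref{eq:cinterLp}, (b) exhibit $\h^d_p(\M)$ as a \emph{complemented} subspace of $\ell_p(L_p(\M))$ by a single projection that is simultaneously bounded for all the relevant exponents, and (c) invoke the standard fact that complex interpolation commutes with complemented subspaces (retraction/coretraction).

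\textbf{Step 1: the ambient scale.} Apply \eqref{eq:cinterLp} to the semifinite von Neumann algebra $\N=\M\overline{\otimes}\ell_\infty(\N)$ equipped with the trace $\tau\otimes(\text{counting measure})$. This gives $\big(\ell_{p_0}(L_{p_0}(\M)),\ell_{p_1}(L_{p_1}(\M))\big)_\theta=\ell_p(L_p(\M))$ with equal norms, where I use the canonical identification $L_r(\N)=\ell_r(L_r(\M))$. (One must note that all these $\ell_r(L_r(\M))$ embed compatibly into the measurable operators affiliated with $\N$, so the couple is genuinely compatible; this is immediate.)

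\textbf{Step 2: the projection.} Define $Q$ on sequences $a=(a_n)_{n\ge1}$ by $(Qa)_n=\E_n(a_n)-\E_{n-1}(a_n)$, with the convention $\E_0=0$. Then $Qa$ is always a martingale difference sequence, $Q$ is the identity on $\h^d_p(\M)$ (since a difference sequence $dx=(dx_n)$ satisfies $\E_n(dx_n)=dx_n$ and $\E_{n-1}(dx_n)=0$), and $Q^2=Q$. The point is that $Q$ is bounded on $\ell_r(L_r(\M))$ \emph{uniformly}: for each fixed $n$, $x\mapsto\E_n(x)-\E_{n-1}(x)$ is a difference of two contractions on $L_r(\M)$, hence has norm $\le 2$ on $L_r(\M)$, and since $Q$ acts diagonally in the index $n$ it follows that $\|Q\|_{\ell_r(L_r(\M))\to\ell_r(L_r(\M))}\le 2$ for every $1\le r\le\infty$. (In fact for $r=\infty$ one uses the weak-$*$ continuity of the $\E_n$; the diagonal structure makes all of this routine.) Thus $Q$ is a bounded projection from $\ell_r(L_r(\M))$ onto $\h^d_r(\M)$ for $r=p_0,p_1,p$, with uniformly bounded norms.

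\textbf{Step 3: conclude by retraction.} Since $\h^d_{p_j}(\M)$ is the range of the bounded projection $Q$ on $\ell_{p_j}(L_{p_j}(\M))$ for $j=0,1$, and $Q$ restricted to $\ell_p(L_p(\M))$ has range $\h^d_p(\M)$, the general principle that complex interpolation is stable under complemented subspaces (the inclusion $\h^d_r(\M)\hookrightarrow\ell_r(L_r(\M))$ is a coretraction with retraction $Q$; see e.g.\ \cite{BL}, Theorem 6.4.2) gives
\begin{equation*}
(\h^d_{p_0}(\M),\h^d_{p_1}(\M))_\theta=Q\big((\ell_{p_0}(L_{p_0}(\M)),\ell_{p_1}(L_{p_1}(\M)))_\theta\big)=Q\big(\ell_p(L_p(\M))\big)=\h^d_p(\M),
\end{equation*}
with equivalent norms (the equivalence constants depending only on $\|Q\|$, hence absolute).

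\textbf{Main obstacle.} There is no deep obstacle; the only point needing a little care is the endpoint $p_1=\infty$ — one must check that $\h^d_\infty(\M)=\ell_\infty(L_\infty(\M))$-martingale differences sits correctly in the scale and that $Q$ is weak-$*$ continuous there — and the verification that the couple $(\ell_{p_0}(L_{p_0}(\M)),\ell_{p_1}(L_{p_1}(\M)))$ is compatible and that the identification in Step 1 is isometric. Both are standard facts about noncommutative $L_p$-spaces over $\M\overline{\otimes}\ell_\infty$, so the proof is short.
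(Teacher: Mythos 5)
Your proof is correct and follows essentially the same route as the paper: the paper also uses the projection $(a_n)_{n\geq 1}\mapsto(\E_n(a_n)-\E_{n-1}(a_n))_{n\geq 1}$, which exhibits $\h^d_p(\M)$ as a $2$-complemented subspace of $\ell_p(L_p(\M))$, and then invokes the fact that the spaces $\ell_p(L_p(\M))$ form a complex interpolation scale. Your additional remarks on identifying $\ell_p(L_p(\M))$ with $L_p(\M\overline{\otimes}\ell_\infty(\N))$ and on the retraction/coretraction principle just make explicit what the paper leaves implicit.
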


\begin{proof}
Recall that $\h^d_{p}(\M)$ consists of martingale difference sequences in $\ell_p(L_p(\M))$. 
So $\h^d_{p}(\M)$ is $2$-complemented in $\ell_p(L_p(\M))$ for $1\leq p\leq \infty$ via the projection
$$P : \left\{\begin{array}{ccc}
              \ell_p(L_p(\M)) & \longrightarrow & \h^d_{p}(\M)\\
		(a_n)_{n\geq 1} & \longmapsto & (\E_n(a_n)-\E_{n-1}(a_n))_{n\geq 1}
             \end{array}\right..$$
The fact that $\ell_p(L_p(\M))$ form an interpolation scale with respect to the complex interpolation yields the required result.
\end{proof}

\hspace{-0.4cm}{\it Proof of Theorem \ref{th:interp_bmo_h1}}
The row version of Lemma \ref{le:interp_hpc} holds true, as well, 
by considering the equivalent quasinorm $N^r_p$ of $\|\cdot\|_{\h^r_p}$. 
The diagonal version is ensured by Lemma \ref{le:interpolation_hpd}. 
Thus Lemma \ref{le:add_interpolation} yields the nontrivial inclusion 
$\h_{q}(\M)\subset (\h_1(\M), \h_p(\M))_{\theta}$ for $1<p\leq 2$. 
On the other hand, by \eqref{eq:burkh} we have $\h_p(\M)=L_p(\M)$ for $1<p<\infty$ 
and \eqref{eq:bmo_Linfty} yields by duality the inclusion $\h_1(\M)\subset L_1(\M)$. 
Hence \eqref{eq:cinterLp} gives the reverse inclusion 
$ (\h_1(\M), \h_p(\M))_{\theta}\subset\h_{q}(\M)$ for $1<p <\infty$. 
That establishs the analogue of Lemma \ref{le:interp_hpc} for $\h_p(\M)$, 
and Theorem \ref{th:interp_bmo_h1} follows using duality and Wolff's interpolation theorem. 
\cqd

\vspace{0.3cm}

We now consider the real method of interpolation. 
We show that the main result of this section remains true for this method. 
For $1<p<\infty$ and $1\leq r \leq \infty$, 
similarly to the construction of the space $L_{p}^{\cond}(\M;\ell_2^c)$ in Remark \ref{rk:density} we define 
the column and row subspaces of $L_{p,r}(\M\overline{\otimes} \mathrm{B}(\ell_2(\N^2)))$, 
denoted by $L_{p,r}^{\cond}(\M;\ell_2^c)$ and $L_{p,r}^{\cond}(\M;\ell_2^r)$, respectively.
Let $\h_{p,r}^c(\M)$ be the space of martingales $x$ such that $dx \in  L_{p,r}^{\cond}(\M;\ell_2^c)$. 

\begin{theorem}\label{th:rinterp_bmoc_h1c}
Let $1<p<\infty$ and $1\leq r \leq \infty$. 
Then, the following holds with equivalent norms
\begin{equation}\label{eq:real_interpolation}
(\bmo^c(\M), \h^c_1(\M))_{\frac{1}{p},r}=\h^c_{p,r}(\M).
\end{equation}
\end{theorem}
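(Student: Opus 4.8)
The plan is to mimic the proof of Theorem \ref{th:interp_bmoc_h1c}, but with the complex method replaced throughout by the real method, using the fact that both the noncommutative $L_p$-spaces and the conditioned Hardy spaces behave well under real interpolation. First I would prove the real-method analogue of Lemma \ref{le:interp_hpc}, namely that for $1<p<\infty$, $0<\theta<1$, $1\leq r\leq\infty$ and $\frac{1-\theta}{1}+\frac{\theta}{p}=\frac{1}{q}$ one has $(\h_1^c(\M),\h_p^c(\M))_{\theta,r}=\h_{q,r}^c(\M)$ with equivalent norms. For the easy inclusion I would use that $\h_p^c(\M)$ is $1$-complemented in $L_p^{\cond}(\M;\ell_2^c)$ (Lemma $6.4$ of \cite{JX1}), that the latter is complemented in $L_p(\M,\ell_2^c(\N^2))$ via Stein's projection and hence in $L_p(\M\overline{\otimes}\mathrm{B}(\ell_2(\N^2)))$, and then invoke \eqref{eq:rinterpLpq} to get $(\h_1^c(\M),\h_p^c(\M))_{\theta,r}\subset\h_{q,r}^c(\M)$ by complementation, exactly as in Step $2$ of Lemma \ref{le:interp_hpc}. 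The reverse inclusion I would obtain by a standard reiteration/Wolff argument: once the complex-method identity \eqref{eq:interp_hpc} is known, a direct application of the Wolff-type reiteration (or the fact that the real interpolation spaces between two members of a complex interpolation scale are again the expected Lorentz-type spaces) gives the real-method version. Concretely, since by Theorem \ref{th:interp_bmoc_h1c} and Lemma \ref{le:interp_hpc} the spaces $\h_{q}^c(\M)$ for $1<q<\infty$ together with $\bmo^c(\M)$ and $\h_1^c(\M)$ form a complex interpolation scale, the reiteration theorem $4.6.1$ of \cite{BL} identifies $(\h_1^c(\M),\h_p^c(\M))_{\theta,r}$ with $(\h_{q_0}^c(\M),\h_{q_1}^c(\M))_{\eta,r}$ for suitable parameters, and choosing $q_0,q_1$ on either side of $q$ reduces everything to the real interpolation of two $L_p$-type spaces, which is the Lorentz space $\h_{q,r}^c(\M)$ by \eqref{eq:rinterpLpq} and complementation.

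With the real-method analogue of Lemma \ref{le:interp_hpc} in hand, I would next establish the real-method analogue of Lemma \ref{le:interp_bmoc_hqc}, i.e. $(\bmo^c(\M),\h_q^c(\M))_{q/p,r}=\h_{p,r}^c(\M)$ for $1<q<p<\infty$, by applying the duality theorem for real interpolation (Theorem $3.7.1$ of \cite{BL}) to the already-proved identity $(\h_1^c(\M),\h_p^c(\M))_{\theta,r'}=\h_{q,r'}^c(\M)$, using the duality $(\h_p^c(\M))^*=\h_{p'}^c(\M)$ for $1<p<\infty$ and $(\h_1^c(\M))^*=\bmo^c(\M)$ recalled in Remark \ref{rk:density}; one must take care here with the conjugate index $r'$ and with the reflexivity issues, but the finiteness of the trace and the density statements in Remark \ref{rk:density} make this routine. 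Finally I would run the same Wolff interpolation argument as in the proof of Theorem \ref{th:interp_bmoc_h1c}: given $1<p<\infty$, pick $1<q<p$ and $0<\phi<1$ with $\frac{1-\phi}{1}+\frac{\phi}{p}=\frac{1}{q}$, set $\theta=\frac{q}{p}$, use the real-method Lemma \ref{le:interp_bmoc_hqc}-analogue to write $\h_p^c(\M)=(\bmo^c(\M),\h_q^c(\M))_{\theta}$ (here one can even use the complex-method identity for the inner scale and the real method only at the outer level, via the real-method Wolff theorem of \cite{Wo}), use the real-method Lemma \ref{le:interp_hpc}-analogue to write $\h_q^c(\M)=(\h_1^c(\M),\h_p^c(\M))_{\phi}$, and conclude $\h_p^c(\M)=(\bmo^c(\M),\h_1^c(\M))_{\varsigma,r}$ with $\varsigma=\frac{\theta\phi}{1-\theta+\theta\phi}=\frac{1}{p}$ by the version of Wolff's theorem valid for the real method. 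The density hypotheses needed for Wolff's theorem are again supplied by Remark \ref{rk:density}.

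The main obstacle I expect is bookkeeping around which interpolation functor is being used at each stage and whether Wolff's theorem is being invoked in a form that mixes the complex method (for the inner reiterations, where exact parameters matter) with the real method (only at the final outer step). A clean route is to prove everything with the real method from the start: first the real-method analogue of \eqref{eq:interp_hpc}, obtained by reiterating the complex-method scale of Theorem \ref{th:interp_bmoc_h1c} via \cite[Thm.~4.6.1]{BL}; then its dual as above; then Wolff's theorem for the real method. The one genuinely delicate point is that Wolff's theorem as stated in the excerpt is for the complex method, so one must either cite its real-method counterpart (which holds under the same density hypotheses, see \cite{Wo}) or avoid it by a direct reiteration argument using that $\bmo^c(\M),\h_q^c(\M)\ (1<q<\infty),\h_1^c(\M)$ already form a complex interpolation scale and then applying \eqref{eq:rinterpLpq}-style real interpolation to two interior members. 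Either way, no new analytic input beyond Section $3$ and Theorem \ref{th:interp_bmoc_h1c} is required; the whole argument is a formal consequence of the complex-method results plus standard interpolation machinery from \cite{BL}.
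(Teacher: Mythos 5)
Your closing fallback remark --- avoid Wolff and duality altogether, use that $\bmo^c(\M)$ and the spaces $\h^c_q(\M)$, $1<q<\infty$, already form a complex interpolation scale, and then real-interpolate two interior members --- is in fact the paper's entire proof. The paper picks $1<p_0<p<p_1<\infty$ with $\frac{1}{p}=\frac{1-\eta}{p_0}+\frac{\eta}{p_1}$, applies the reiteration theorem between the complex and real methods (Theorem $4.7.2$ of \cite{BL}, not $4.6.1$, which is the purely complex reiteration) to the couple $(\bmo^c(\M),\h^c_1(\M))$ to get $(\bmo^c,\h^c_1)_{\frac{1}{p},r}=\big((\bmo^c,\h^c_1)_{\frac{1}{p_0}},(\bmo^c,\h^c_1)_{\frac{1}{p_1}}\big)_{\eta,r}$, identifies the inner spaces with $\h^c_{p_0}(\M)$ and $\h^c_{p_1}(\M)$ by Theorem \ref{th:interp_bmoc_h1c}, and concludes with $(\h^c_{p_0},\h^c_{p_1})_{\eta,r}=\h^c_{p,r}$, which follows by complementation from \eqref{eq:rinterLpq} exactly as in Step $2$ of Lemma \ref{le:interp_hpc}. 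No real-method analogue of Lemma \ref{le:interp_bmoc_hqc}, no duality, and no Wolff theorem are needed; Theorem \ref{th:rinterp_bmoc_h1c} is a two-line corollary of the complex result.

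Your primary route, by contrast, has two genuine soft spots. First, the duality step: Theorem $3.7.1$ of \cite{BL} requires the second interpolation index to be finite, so at $r=1$ (conjugate index $r'=\infty$) the identity you want to dualize is not covered, and in any case you must still identify the dual of the Hardy--Lorentz space $\h^c_{q,r'}(\M)$ with $\h^c_{q',r}(\M)$; this is nowhere established in the paper and is an extra lemma (obtainable by complementation only for $1\leq r'<\infty$), not a routine consequence of the finite trace. Second, the final Wolff step does not chain as written: if you feed Wolff the complex identities $\h^c_p=(\bmo^c,\h^c_q)_{\theta}$ and $\h^c_q=(\h^c_1,\h^c_p)_{\phi}$, its conclusion is again a complex-method statement, i.e. Theorem \ref{th:interp_bmoc_h1c}, and gives no information about $(\bmo^c,\h^c_1)_{\frac{1}{p},r}$; if instead you use the real-method identities, the intermediate spaces they produce are the Lorentz-type spaces $\h^c_{p,r}$ and $\h^c_{q,r}$, which do not coincide with the endpoints $\h^c_p$, $\h^c_q$ occurring in the other hypothesis (unless $r$ happens to equal $p$ or $q$), so the four spaces do not form a Wolff quadruple. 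The correct bridge from the complex scale to the real statement is precisely the mixed reiteration theorem $4.7.2$ of \cite{BL} used by the paper.
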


This result is a corollary of Theorem \ref{th:interp_bmoc_h1c}.

\begin{proof}
By a discussion similar to that at the beginning of Step $2$ 
in the proof of Lemma \ref{le:interp_hpc}, using  \eqref{eq:rinterLpq} we can show that 
for $1<v,s,q<\infty$, $1\leq r \leq \infty$ and $0<\eta<1$ such that $\frac{1}{q}=\frac{1-\eta}{v}+\frac{\eta}{s}$, 
we have with equivalent norms
\begin{equation}\label{eq:rinterp_hqr}
(\h_v^c(\M), \h_s^c(\M))_{\eta,r}=\h^c_{q,r}(\M).
\end{equation}
We deduce \eqref{eq:real_interpolation} from \eqref{eq:interpolation} 
using the reiteration theorem on real and complex interpolations. 
Let $1<p<\infty$.
Consider $1<p_0<p<p_1<\infty$. 
There exists $0<\eta<1$ such that
\begin{equation*}
\frac{1}{p}=\frac{1-\eta}{p_0}+\frac{\eta}{p_1}. 
\end{equation*}
By Theorem $4.7.2$ of \cite{BL} we obtain 
$$(\bmo^c(\M),\h^c_1(\M))_{\frac{1}{p},r}
=((\bmo^c(\M),\h^c_1(\M))_{\frac{1}{p_0}},(\bmo^c(\M),\h^c_1(\M))_{\frac{1}{p_1}})_{\eta,r}.$$
Then \eqref{eq:interpolation} yields
$$(\bmo^c(\M),\h^c_1(\M))_{\frac{1}{p},r}=(\h^c_{p_0}(\M),\h^c_{p_1}(\M))_{\eta,r}.$$
An application of \eqref{eq:rinterp_hqr} gives
\begin{equation*}
(\bmo^c(\M),\h^c_1(\M))_{\frac{1}{p},r}=\h^c_{p,r}(\M).
\end{equation*}
This ends the proof of \eqref{eq:real_interpolation}.
\end{proof}

\begin{rk}
Musat's result is a corollary of Theorem \ref{th:interp_bmoc_h1c}. 
By Davis' decomposition proved in \cite{P} we have 
$\H^c_p(\M)=\h^c_p(\M)+\h^d_p(\M)$ for $1\leq p<2$. 
So we can show the analogue of \eqref{eq:interp_hpc} for $1<p<2$ as follows, 
for $0<\theta<1$ and $\frac{1-\theta}{1}+\frac{\theta}{p}=\frac{1}{q}$
$$\begin{array}{cll}
&\H^c_{q}(\M)& \\
= &\h^c_{q}(\M)+\h^d_{q}(\M)&\\
=&(\h_1^c(\M), \h^c_p(\M))_{\theta}+(\h^d_1(\M), \h^d_p(\M))_{\theta} &
\quad\mbox{ by Lemmas \ref{le:interp_hpc} and \ref{le:interpolation_hpd}}\\
\subset&(\h_1^c(\M)+\h^d_1(\M), \h^c_p(\M)+\h^d_p(\M))_{\theta} &\quad\mbox{ by Lemma \ref{le:add_interpolation}}\\
=&(\H_1^c(\M), \H^c_p(\M))_{\theta}.&
\end{array}$$
On the other hand, recall that for $1\leq p <\infty$, $\H^c_p(\M)$ can be identified with 
the space of all $L_p$-martingales $x$ such that $dx\in L_p(\M;\ell^c_2)$. 
Thus we can consider $\H^c_p(\M)$ as a subspace of $L_p(\M\overline{\otimes}B(\ell_2))$ and the reverse inclusion follows. 
Then the same arguments, using duality and Wolff's theorem, yield Theorem $3.1$ of \cite{M}. 
Alternately, we can find Musat's result by defining an equivalent quasinorm for $\|\cdot\|_{\H^c_p(\M)}, 0< p \leq 2$ similar to $N_p^c$, 
as follows
$$\tilde{N}^c_p(x)=\inf_W \Big[\tau\Big(\sum_nw_n^{1-2/p}|dx_{n}|^2\Big)\Big]^{1/2}\approx\|x\|_{\H^c_p(\M)}.$$
Then all the previous proofs can be adapted to obtain the analogue results for $\H^c_p(\M)$.
\end{rk}

\section*{Appendix}
In Section $2$ we established the existence of an atomic decomposition for $\h_1(\M)$. 
The problem of explicitly constructing this decomposition remains open. 
One encounters some substantial difficulties in trying to adapt the classical atomic construction, which used stopping times, 
to the noncommutative setting. 
Note that explicit decompositions of martingales have already been constructed to establish weak-type inequalities (\cite{ran-weak,ran-cond}) 
and a noncommutative analogue of the Gundy's decomposition (\cite{par-ran-gu}). 
In these works, Cuculescu's projections played an important role 
and provide a good substitute for stopping times, 
which are a key tool for all these decompositions in the classical case. 
However, these projections do not seem to be powerful enough for the noncommutative atomic decomposition 
and for the noncommutative Davis' decomposition (see \cite{P}).

\begin{pb}\label{pb:explicit_decomp}
Find a constructive proof of Theorem \ref{th:c_atomic_decomp} or Theorem \ref{th:atomic_decomp}.
\end{pb}

\begin{pb}
 Construct an explicit Davis' decomposition \\
$\H_1(\M)=\h_1^c(\M)+\h_1^r(\M)+\h_1^d(\M)$. 
\end{pb}

It is also interesting to discuss the case of $\h_p$ for $0<p<1$. 
We define the noncommutative analogue of $(p,2)$-atoms as follows.

\begin{defi}
Let $0<p\leq1$. 
$a \in L_2 (\mathcal{M})$ is said to be a $(p,2)_c$-atom
with respect to $(\mathcal{M}_n)_{n \geq 1},$ if there exist $n \geq
1$ and a projection $e \in \mathcal{M}_n$ such that
\begin{enumerate}[{\rm (i)}]

\item  $\mathcal{E}_n (a) =0;$

\item  $r (a) \leq e;$

\item  $ \| a \|_2 \leq \tau (e)^{1/2-1/p}.$
\end{enumerate}
Replacing $\mathrm{(ii)}$ by $ \mathrm{(ii)'}~~l (a)\leq e,$ we get the notion of a $(p,2)_r$-atom.
\end{defi}
We define $\h_p^{c,\at}(\M)$ and  $\h_p^{r,\at}(\M)$ as in Definition \ref{df:h1at}. 
As for $p=1$, we have $\h_p^{c,\at}(\M) \subset \h_p^c(\M)$ contractively.

On the other hand, we can describe the dual space of $\h_p^{c,\at}(\M)$ as a Lipschitz space. 
For $\alpha\geq0$, we set 
$$\Lambda_\alpha^c(\M)=\big\{x\in L_2(\M):\|x\|_{\Lambda_\alpha^c}<\infty\big\}$$
with 
$$\|x\|_{\Lambda_\alpha^c}
=\sup_{n \geq 1} \sup_{e \in \mathcal{P}_n} \tau (e )^{-1/2-\alpha}\tau \big ( e |x - x_n |^2 \big)^{1/2}.$$
By a slight modification of the proof of Theorem \ref{th:duality_h1at} 
(by setting 
$y_e = \frac{(x- x_n )e}{ \| (x -x_n ) e \|_2 \tau (e)^{1/p-1/2}}$) 
we can show that $(\h_p^{c,\at}(\M))^*=\Lambda_\alpha^c(\M)$ for $0<p\leq1$, with $\alpha=1/p-1$.

At the time of this writing we do not know if $\h_p^{c,\at}(\M)$ coincides with $\h_p^c(\M)$. 
The problem of the atomic decomposition of $\h_p(\M)$ for $0<p<1$ is entirely open, 
and is related to Problem \ref{pb:explicit_decomp}.

\begin{pb}
 Does one have $\h_p^{c}(\M)=\h_p^{c,\at}(\M)$ for $0<p<1$?
\end{pb}

\begin{pb}
 Can we describe the dual space of $\h_p^c(\M)$ as a Lipschitz space for $0<p<1$ ?
\end{pb}

Another perspective of research concerns the interpolation results obtained in Section $4$. 
Recall that we define $\h_\infty^c(\M)$ (resp. $\h_\infty^r(\M)$) 
as the Banach space of the $L_{\infty}(\M)$-martingales $x$ such that 
$\sum_{k \geq 1} \E_{k-1}|dx_k|^2$ 
(respectively $\sum_{k \geq 1} \E_{k-1}|dx_k^*|^2$) converge for the weak operator topology. 
We set $\h_\infty(\M)=\h^c_\infty(\M)\cap \h^r_\infty(\M)\cap \h^d_\infty(\M)$. 
At the time of this writing we do not know if the interpolation result \eqref{eq:interpolation} remains true 
if we replace $\bmo(\M)$ by $\h_\infty(\M)$.

\begin{pb}
 Does one have $(\h^c_\infty(\M), \h^c_1(\M))_{\frac{1}{p}}=\h^c_{p}(\M)$ for $1< p<\infty$ ?
\end{pb}

\section*{Acknowledgment} The second named author is grateful to Professor Quanhua Xu for the
support of the two months visit to Laboratoire de Math\'{e}matiques,
Universit\'{e} de Franche-Comt\'{e} and the warm atmosphere at the
department, where a preliminary version of the paper was done.

\end{document}